\documentclass{amsart}

\usepackage{amsmath}
\usepackage{amssymb}
\usepackage{amsthm}

\usepackage{tikz}
\usetikzlibrary{matrix,arrows}

\usepackage[utf8]{inputenc}
\usepackage{cite}
\usepackage{hyperref}

\newcommand{\IZ}{\mathbb{Z}}

\newcommand{\IQ}{\mathbb{Q}}
\newcommand{\IC}{\mathbb{C}}
\newcommand{\IN}{\mathbb{N}}
\newcommand{\IF}[1]{\mathbb{F}_{#1}}

\newcommand{\image}{\operatorname{Im} }

\newcommand{\Hom}[2]{\operatorname{Hom}(#1, #2)}

\newcommand{\ICellC}{\operatorname{C}}

\newcommand{\Ho}[2]{H_{#1}(#2)}
\newcommand{\cHo}[2]{H^{#1}(#2)}

\newcommand{\ThEul}[1]{\chi_-(#1)}
\newcommand{\width}{\operatorname{width}}
\newcommand{\Thur}[1]{\left\| #1 \right\|_T}
\newcommand{\Map}[3]{#1 \colon #2 \rightarrow #3}

\newcommand{\IZfZ}[1]{\IZ/#1 \IZ}
\newcommand{\IZk}{\IZfZ{k}}
\newcommand{\IZp}{\IZfZ{p}}
\newcommand{\ind}[1]{\operatorname{ind}_{#1}}
\newcommand{\res}[1]{\operatorname{res}_{#1}}

\theoremstyle{plain}
\newtheorem{thm}{Theorem}[section]
\newtheorem*{thm*}{Theorem}

\newtheorem{lem}[thm]{Lemma}
\newtheorem{cor}[thm]{Corollary}

\theoremstyle{definition}
\newtheorem{defn}[thm]{Definition}

\newtheorem{exm}[thm]{Example}

\theoremstyle{remark}
\newtheorem{rem}[thm]{Remark}
\newtheorem*{claim}{Claim}

\begin{document}
\title[Twisted Reidemeister torsion and the Thurston norm]{Twisted Reidemeister torsion and the Thurston norm: graph manifolds and finite representations}
\author{Stefan Friedl}
\address{Department of Mathematics, University of Regensburg, Germany}
\email{sfriedl@gmail.com}
\author{Matthias Nagel}
\address{Department of Mathematics, University of Regensburg, Germany}
\email{matthias.nagel@mathematik.uni-regensburg.de}
\urladdr{http://homepages.uni-regensburg.de/~nam23094/}

\begin{abstract}
We show that the Thurston norm of any irreducible 3-manifold can be detected
using twisted Alexander polynomials corresponding to integral representations
and also corresponding to representations over finite fields. In particular our
result covers graph manifolds which is not covered by the earlier work of the
first author and Vidussi.
\end{abstract}

\maketitle
\section{Introduction}
Define for an oriented surface $\Sigma$ with 
components $\Sigma_i$ the complexity 
\begin{align*}
\chi_-(\Sigma) := \sum_i \max( -\chi(\Sigma_i), 0).
\end{align*}
For a $3$-manifold $N$ Thurston \cite{Thurston86} introduced a semi-norm on
$\Ho{2}{N, \partial N;\IZ}$. This semi-norm, now called \emph{Thurston norm},
is defined as
\begin{align*}
\Thur \sigma := \min \left\{ \ThEul{\Sigma} : \Sigma \text{ oriented, embedded surface with } [\Sigma] = \sigma \right\},
\end{align*}
where $[\Sigma]$ denotes the fundamental class of an oriented surface $\Sigma$.
By Poincar\'e duality we transfer this norm to $\cHo 1 {N;\IZ}$ and henceforth consider it only
as a semi-norm on cohomology.

We denote the cell complex of a universal cover of $N$ by $\ICellC(N)$. After
fixing a field $K$, we can twist this chain complex with a $(K(t),
\IZ[\pi_1(N)])$-bimodule $A$, obtaining $\ICellC(N;A) := A \otimes \ICellC(N)$.
For a $g \in \pi_1(N)$ the endomorphism $g_A \colon A \rightarrow A$ given by
$g_A(v) := v \cdot g$ is a linear map of the $K(t)$-vector space $A$.  
In Section \ref{sec:prelim} we recall the definition of 
the twisted Reidemeister torsion $\tau(N; A)\in K(t)$ of 
the chain complex $\ICellC(N;A)$. The Reidemeister
torsion $\tau(N;A)$ is a unit in $K(t)$ and well-defined up to multiplication with
$\pm \det g_A$ for a $g \in \pi_1(N)$. 

Given an element $p(t) \in K[t] \setminus \{0\}$ in the polynomial ring with $p(t) = \sum_{i=k}^{l}a_i t^i$ and
$a_l, a_k$ both non-zero, we define $\width p(t) := l-k$. We extend this
assignment to any-non zero element in the quotient field $K(t)$ by declaring
\begin{align*}
\width \left( p(t)/q(t) \right) &= \width p(t) - \width q(t),
\end{align*}
for non-zero $q(t) \in K[t]$. Finally, we use the convention that the width of the zero 
polynomial is zero.

\begin{defn}
\begin{enumerate}
\item A \emph{representation} $V$ of a group $G$ is a $(K, \IZ[G])$-bimodule, i.e.
a $K$-vector space with with a linear right action by $G$.
\item Let $V$ be a representation of $\pi_1(N)$.  For an element $\theta \in \cHo 1 {N;\IZ}$
the associated \emph{twisting-module} $V_\theta$ is the $(K(t), \IZ[\pi_1(N)])$-bimodule with
underlying $K(t)$-vector space $V_\theta := K(t) \otimes_K V$ and right
$\pi_1(N)$-action given by
\begin{align*}
(z \otimes v) \cdot g := z t^{\langle \theta, g\rangle} \otimes v \cdot g,
\end{align*}
where $\langle \theta, g\rangle$ is the evaluation of $\theta \in \cHo 1 {N;\IZ} = \Hom {\pi_1(N)} \IZ$
on $g \in \pi_1(N)$.
\end{enumerate}
\end{defn}

It has been known for a very long time that Reidemeister torsions, or perhaps
more precisely, its close cousin the Alexander polynomial, give a lower bound
on the genus of a knot.  This inequality was generalised in 
\cite[Theorem 1.1]{Friedl06} as follows.

\begin{thm}\label{thm:maintheorem-km}
Let $N$ be a $3$-manifold. Let 
$\theta \in \cHo 1 {N;\IZ}$ be a cohomology class.
For every representation $V$ of $\pi_1(N)$ the Thurston norm $\Thur \theta$ 
satisfies the inequality
\begin{align*}
\left( \dim V \right)\cdot \Thur \theta \geq \width \tau(N; V_\theta).
\end{align*}
\end{thm}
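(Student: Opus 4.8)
The plan is to bound the width of $\tau(N;V_\theta)$ by choosing a surface that realizes the Thurston norm and cutting $N$ along it. Concretely, I would first reduce to the case where $N$ is irreducible and where $\theta$ is represented by a connected embedded surface $\Sigma$ with $\chi_-(\Sigma) = \Thur\theta$ and $\Sigma$ norm-minimizing; closed components of $\Sigma$ of positive genus can be discarded or handled by a separate argument, and spherical components do not affect the Thurston norm. I would also first handle the degenerate case $\Thur\theta = 0$ separately. The essential geometric input is that, after cutting $N$ along $\Sigma$, one obtains a manifold $M$ with $\partial M \supseteq \Sigma_+ \sqcup \Sigma_-$, two copies of $\Sigma$, and that $N$ is recovered by gluing $\Sigma_+$ to $\Sigma_-$.

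Next I would set up a Mayer--Vietoris / mapping-torus type computation of $\tau(N; V_\theta)$. Writing $N$ as $M$ with its two boundary copies of $\Sigma$ identified, there is a short exact sequence of chain complexes relating $\ICellC(N; V_\theta)$, $\ICellC(M; \res{} V_\theta)$, and $\ICellC(\Sigma; \res{} V_\theta)$; multiplicativity of Reidemeister torsion then gives $\tau(N;V_\theta)$ as a ratio involving $\tau(M)$, $\tau(\Sigma)$, and the torsion of the long exact homology sequence. Because $t$ acts on $V_\theta$ via $t^{\langle\theta,g\rangle}$ and $\theta$ restricted to the copies of $\Sigma$ and to $M$ is null (that is exactly why these are the pieces obtained from a surface dual to $\theta$), the twisting on $M$ and on $\Sigma$ is trivial in $t$: the relevant chain complexes over $K(t)$ are $K(t)\otimes_K$ (something defined over $K$), so their torsions, suitably interpreted, have width $0$. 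The only place $t$ genuinely enters is in the gluing isomorphism $\Sigma_+ \to \Sigma_-$, which is multiplication by $t$ (tensored with a $K$-linear iso), acting on a $K(t)$-vector space of dimension $\dim V \cdot b$ where $b$ is the total dimension of $H_*(\Sigma; V)$ (or of the chain groups).

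From here the width estimate is essentially linear algebra: $\tau(N;V_\theta)$ is, up to a unit of width $0$, the determinant of a matrix of the form $\mathrm{id} - t\cdot \Phi$ over $K$ acting on a space of dimension at most $(\dim V)\cdot \dim_K \ICellC(\Sigma)$ — or, sharpening with the homology long exact sequence, on a space governing $H_*(\Sigma;V)$ — and such a determinant is a polynomial in $t$ whose width is at most the dimension of that space. Summing the contributions over the components $\Sigma_i$ and recognizing $\sum_i \dim_K$ (relevant homology of $\Sigma_i$) $\le \sum_i (\dim V)\cdot(-\chi(\Sigma_i))^+ = (\dim V)\cdot\Thur\theta$, via the identification of the reduced Euler characteristic with an alternating sum of Betti numbers (and the fact that for a connected surface with boundary $-\chi(\Sigma) = \dim H_1(\Sigma)-\dim H_0(\Sigma)$), yields the claimed inequality $(\dim V)\cdot\Thur\theta \ge \width\tau(N;V_\theta)$.

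The main obstacle I anticipate is bookkeeping rather than deep theory: handling the indeterminacy of Reidemeister torsion (the $\pm\det g_A$ ambiguity, which has width $0$ so is harmless but must be tracked), dealing cleanly with the case where $\tau(N;V_\theta)=0$ or where intermediate torsions vanish (so the multiplicativity formula must be applied with care, possibly passing to the homological version where one works with the torsion of the acyclic complex obtained after tensoring with $K(t)$), and correctly identifying which homology of $\Sigma$ controls the width so that the bound comes out as $\chi_-$ and not something weaker. A secondary subtlety is justifying the reduction to a connected norm-minimizing surface and ensuring the cut manifold $M$ is nice enough (irreducible, boundary-incompressible need not hold, but that is not needed for the torsion computation — only for sharpness, which is not claimed here).
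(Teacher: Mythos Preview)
The paper does not prove this theorem; it is quoted as a known result from \cite{Friedl06}, so there is no in-paper argument to compare against.

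For what it is worth, your outline is essentially the standard proof found in that reference and its predecessors: pick a norm-minimizing surface $\Sigma$ dual to $\theta$, note that $\theta$ restricts trivially to $\Sigma$ and to the cut-open manifold $M$, and use the resulting short exact sequence of chain complexes to write $\tau(N;V_\theta)$ (up to width-$0$ factors coming from $\tau(\Sigma)$, $\tau(M)$, and the indeterminacy $\pm\det g_A$) as an alternating product of determinants of maps of the shape $A - tB$ on finite-dimensional $K$-vector spaces controlled by $H_*(\Sigma;V)$. The width bound then drops out of the elementary estimate $\width\det(A - tB)\le d$ on a $d$-dimensional space, together with the identification of the relevant alternating dimension sum with $(\dim V)\,\chi_-(\Sigma)$. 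Your list of anticipated obstacles is accurate, and the convention $\width(0)=0$ disposes of the non-acyclic case. The one place that deserves more care than you indicate is the passage from ``$\det(\mathrm{id}-t\Phi)$'' to the actual alternating product: in general $H_i(\Sigma;V)$ and $H_i(M;V)$ need not have the same dimension, so the connecting maps in the long exact sequence are not square, and one must organise the torsion of that sequence carefully before reading off the width; this is where the argument in \cite{Friedl06} spends most of its effort.
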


In \cite[Theorem 1.2]{Friedl12} it was shown that for any irreducible
3-manifold that is not a closed graph manifold there exists a unitary
representation such that the corresponding twisted Reidemeister torsions detect
the Thurston norm.
The proof of that result relies on the fact, as it was put in \cite{Agol15},
that  by the work of Agol \cite{Agol13}, Przytycki--Wise  \cite{PWise12} and
Wise \cite{Wise12} such 3-manifolds are `full of cubulated goodness'.

The following theorem is the main result of this paper.

\begin{thm}\label{thm:mainthm}
Let $N$ be an irreducible 3-manifold which is not $D^2 \times S^1$. 
For every $\theta \in \cHo{1}{N; \IZ}$ 
there is a representation $V$ factoring through a finite group 
which detects the Thurston norm of $\theta$, i.e.\ such that
\begin{align*}
\left( \dim V \right)\cdot \Thur \theta =\width \tau(N; V_\theta).
\end{align*}
Additionally, the representation $V$ can be chosen to be either
\begin{enumerate}
\item defined over the complex numbers and be integral, or
\item defined over a finite field $\IF q$ for almost all primes $q$.
\end{enumerate}
\end{thm}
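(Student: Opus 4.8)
The plan is to reduce the statement to the case handled in \cite[Theorem 1.2]{Friedl12} by the JSJ machinery, and then handle the remaining (Seifert-fibered and graph-manifold) pieces by finding finite-quotient representations directly. First I would observe that by Theorem \ref{thm:maintheorem-km} only the inequality ``$\geq$'' needs to be proved; the converse is automatic. Moreover, the problem is additive under splitting $N$ along an incompressible torus: if one fixes a surface $\Sigma$ dual to $\theta$ that is taut and in Thurston position, it meets the JSJ tori in a collection of essential curves, decomposing $\Sigma$ into pieces sitting in the JSJ components. The technical heart of this reduction is a Mayer--Vietoris / gluing argument for Reidemeister torsion (the usual multiplicativity $\tau(N;A) = \tau(N_1;A)\cdot\tau(N_2;A)/\tau(T;A)$ up to the relevant ambiguity), which lets one add up the widths of the local torsions. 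So it suffices to produce, for each JSJ piece $N_v$ and each cohomology class induced on it, a representation factoring through a finite group that detects the Thurston norm of that piece, \emph{together with} enough compatibility on the gluing tori so that the pieces' representations can be combined into a single representation of $\pi_1(N)$.

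Next I would treat the two types of JSJ pieces. For a hyperbolic (or more generally non-graph, non-Seifert) piece, \cite[Theorem 1.2]{Friedl12} already supplies a unitary representation detecting the Thurston norm, coming from a surjection of $\pi_1$ onto a finite group via the virtual specialness of the piece; I would need to arrange such surjections to be compatible on the JSJ tori, using separability of the relevant subgroups (peripheral subgroup separability, which holds for all these pieces) to pass to a common finite quotient of $\pi_1(N)$. For a Seifert-fibered piece, the Thurston norm is computed explicitly in terms of the base orbifold Euler characteristic and the fiber direction, and I would exhibit an abelian-by-finite representation (for instance a character on a finite covering dual to the class) whose twisted Alexander polynomial realizes this value; graph manifolds are glued out of such pieces, so once the per-piece detection is available and glued, the graph-manifold case — the one missing from \cite{Friedl12} — falls out.

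Finally I would arrange the two flavours of coefficients. Given a representation $V$ over $\IC$ factoring through a finite group $G$, it is conjugate to a representation defined over a number field, and after clearing denominators one obtains an integral representation — this gives item (1). For item (2), I would reduce such an integral representation modulo a prime $q$: the width of the twisted Reidemeister torsion is governed by the orders of certain homology groups, and for all but finitely many $q$ these orders are coprime to $q$, so reduction mod $q$ does not drop the width; hence the $\IF_q$-representation still detects $\Thur\theta$ for almost all $q$.

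The main obstacle I anticipate is the \emph{compatibility of the finite quotients across the JSJ tori}: detecting the norm on each piece individually is essentially known, but one must choose the finite quotients so that they agree on the peripheral tori and the fiber/gluing data matches, so that the local representations genuinely assemble into one representation of $\pi_1(N)$ whose torsion is the product of the local ones. This is where the separability input (and a careful bookkeeping of the gluing in the Mayer--Vietoris formula for torsion, including the torus correction terms) is indispensable, and getting the $\IF_q$-reduction to be norm-preserving \emph{simultaneously} on all pieces for almost all $q$ is the delicate quantitative point.
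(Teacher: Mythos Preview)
Your proposal takes a genuinely different route from the paper, and the obstacle you flag at the end is precisely what the paper's strategy is designed to avoid.

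You propose to cut $N$ along its JSJ tori, detect the Thurston norm on each piece separately, and then glue the local representations into a single one on $\pi_1(N)$. The paper never does this. Instead it bifurcates on whether $N$ is a graph manifold or not, and in each case produces a \emph{single} finite quotient of $\pi_1$ of (a finite cover of) $N$ through which the detecting representation factors:
\begin{itemize}
\item If $N$ is not a graph manifold, the paper invokes the virtual fibering theorem (Agol, Przytycki--Wise, Wise) for $N$ itself: there is a finite regular cover $p\colon M\to N$ with $p^*\theta$ in the closure of the fibred cone. On $M$ one then finds a single character $\alpha\colon \pi_1(M)\to\IZ/q\IZ$ (via a sharpening of \cite[Lemma~5.7]{Friedl12}, Lemma~\ref{lem:deltaq}) such that the augmentation ideal $\res{\alpha} I(\IC)$ detects $\Thur{p^*\theta}$; inducing up to $\pi_1(N)$ via Lemma~\ref{lem:cover} finishes. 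No JSJ cutting, no gluing of representations.
\item If $N$ is a graph manifold, the paper passes (Lemma~\ref{lem:graphcover}, relying on \cite{Nagel14}) to a finite cover carrying a graph structure and a \emph{global} Seifert non-vanishing character $\alpha\colon\pi_1\to\IZ/p\IZ$. The detecting representation is again an augmentation ideal pulled back along this one $\alpha$; the Mayer--Vietoris computation of Theorem~\ref{thm:graphdetecting} is then straightforward because the same representation is used on every block and is automatically compatible on the tori.
\end{itemize}

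Your gluing approach is not obviously doomed, but the compatibility problem you identify is real and you have not resolved it. Arranging finite quotients of the JSJ pieces that agree on each incompressible torus, \emph{and} such that the glued representation makes the torus chain complexes acyclic with the right width contribution, is delicate; in the mixed case you would need the virtually-special quotients coming from the hyperbolic pieces to match the abelian characters on the Seifert pieces along the boundary, and nothing in your sketch forces this. The paper's point is exactly that one can sidestep all of this by working globally from the start.

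A smaller remark: your route to integrality (conjugate into a number field and clear denominators) still owes an argument that the resulting lattice is $G$-stable. The paper's representations are augmentation ideals of $\IZ[\IZ/q\IZ]$ and hence integral by construction, and induction from a finite-index subgroup preserves integrality (Remark~\ref{rem:inducedreps}). Your reduction-mod-$q$ idea for part~(2), on the other hand, is essentially what the paper does in Lemma~\ref{lem:taumodp}.
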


Note that a complex representation which factors through a finite group
can be made unitary.

Our main theorem extends \cite[Theorem 1.2]{Friedl12} in two ways:
It extends the statement over to closed graph manifolds, that were excluded in
\cite{Friedl12} since these are in general `not full of cubulated goodness',
see \cite{Liu13}. This extension relies on recent work of the second author
\cite[Theorem~2.15]{Nagel14}.

Secondly, our theorem gives a refined statement about which types of representations can
detect the Thurston norm. In particular the result that representations over
finite fields can be used plays a critical role in the proof in \cite{Boileau15}
that the profinite completion of the knot group determines the knot genus.

We conclude this introduction with an observation. By \cite[Section~3]{Friedl06} the degrees of twisted Reidemeister torsions of a 3-manifold $N$ only depend on the fundamental group and on whether or not $N$ has boundary. We thus obtain the following corollary.

\begin{cor}\label{cor:samethurston}
The Thurston norm of an irreducible $3$-manifold is an invariant of its fundamental group.
\end{cor}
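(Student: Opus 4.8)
The plan is to feed the two main theorems into the degree computation of \cite[Section~3]{Friedl06}. Let $N_1,N_2$ be irreducible $3$-manifolds and fix an isomorphism $\varphi\colon\pi_1(N_1)\xrightarrow{\sim}\pi_1(N_2)$. Since $\cHo 1 {N;\IZ}=\Hom{\pi_1(N)}\IZ$, this induces an isomorphism $\varphi^*\colon\cHo 1 {N_2;\IZ}\xrightarrow{\sim}\cHo 1 {N_1;\IZ}$; and for a representation $V$ of $\pi_1(N_2)$ and $\theta\in\cHo 1 {N_2;\IZ}$, precomposing with $\varphi$ gives a representation $V^\varphi$ of $\pi_1(N_1)$ of the same dimension, and the twisting-module $V^\varphi_{\varphi^*\theta}$ is just $V_\theta$ with $\pi_1(N_1)$ acting through $\varphi$ (because $\langle\varphi^*\theta,g\rangle=\langle\theta,\varphi(g)\rangle$). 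We must show $\Thur{\varphi^*\theta}=\Thur\theta$ for every $\theta\in\cHo 1 {N_2;\IZ}$, with the left-hand norm taken in $N_1$.

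First I would dispose of the cases forced by $\pi_1$ alone. If $\pi_1(N_i)$ is finite, then $\cHo 1 {N_i;\IZ}=0$ and there is nothing to prove. If $\pi_1(N_i)\cong\IZ$, then by irreducibility $N_i\cong D^2\times S^1$ (the solid torus is the unique irreducible $3$-manifold with infinite cyclic fundamental group), and its Thurston norm vanishes identically, so again there is nothing to prove. In the remaining case $\pi_1(N_i)$ is infinite and not infinite cyclic; then $N_i$ is aspherical, so $H_3(\pi_1(N_i);\IZfZ 2)\cong H_3(N_i;\IZfZ 2)$, which is $\IZfZ 2$ when $N_i$ is closed and $0$ when $\partial N_i\neq\emptyset$. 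Thus having nonempty boundary is an invariant of $\pi_1(N_i)$; in particular $N_1$ and $N_2$ are simultaneously closed or bounded, and neither is $D^2\times S^1$, so Theorem~\ref{thm:mainthm} applies to both.

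In this remaining case I would combine Theorem~\ref{thm:maintheorem-km}, which gives $\width\tau(N;V_\theta)\leq(\dim V)\,\Thur\theta$ for \emph{every} representation $V$, with Theorem~\ref{thm:mainthm}, which produces one representation attaining equality, to obtain
\begin{align*}
\Thur\theta=\max_{V}\ \frac{\width\tau(N;V_\theta)}{\dim V},
\end{align*}
the maximum over all representations $V$ of $\pi_1(N)$. By \cite[Section~3]{Friedl06} the quantity $\width\tau(N;V_\theta)$ depends only on $\pi_1(N)$, on the pair $(V,\theta)$, and on whether $N$ has boundary. Transporting along $\varphi$ and using that $N_1,N_2$ have boundary simultaneously, $\width\tau(N_2;V_\theta)=\width\tau(N_1;V^\varphi_{\varphi^*\theta})$ for every $V$; since $V\mapsto V^\varphi$ and $\theta\mapsto\varphi^*\theta$ are dimension-preserving bijections, the two maxima agree and $\Thur\theta=\Thur{\varphi^*\theta}$.

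The part that really needs an argument, as opposed to bookkeeping around the cited theorems, is the reduction in the second paragraph: namely that among irreducible $3$-manifolds the presence of boundary is a fundamental-group invariant. This is what forces one to peel off the finite and infinite-cyclic cases, where $D^2\times S^1$ --- excluded from Theorem~\ref{thm:mainthm} --- must be handled by hand (harmlessly, since its norm is trivial), and where one uses that irreducibility together with infinite $\pi_1$ gives asphericity and hence $H_3(\pi_1(N);\IZfZ 2)\cong H_3(N;\IZfZ 2)$. I expect this case analysis, not the torsion estimates, to be the only real obstacle.
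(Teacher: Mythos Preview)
Your proof is correct and follows the same route as the paper's one-line justification: combine Theorem~\ref{thm:mainthm} with the fact from \cite[Section~3]{Friedl06} that $\width\tau(N;V_\theta)$ depends only on $\pi_1(N)$, the pair $(V,\theta)$, and whether $\partial N\neq\emptyset$. You supply two details the paper leaves implicit --- the disposal of $D^2\times S^1$ and the asphericity argument that having nonempty (toroidal) boundary is a $\pi_1$-invariant --- but the strategy is identical.
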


For closed 3-manifolds that is of course also a consequence of the fact that irreducible 3-manifolds that are not lens spaces are determined by their fundamental groups, see \cite[Chapter~2.1]{AFW15} for detailed references. For 3-manifolds with boundary the statement is slightly less obvious, since there are non-homeomorphic irreducible 3-manifolds with non-trivial boundary but isomorphic fundamental groups. It should not be hard though to prove Corollary \ref{cor:samethurston} using the theory of Dehn flips introduced by Johannson, see \cite[Section~29]{Johannson79} and \cite[Chapter~2.2]{AFW15} for details.

\subsection*{Conventions}
A $3$-manifold is understood to be connected, smooth, compact, orientable
and having only toroidal boundary, which can be empty. A vector space is also
understood to be finite dimensional. 
\subsection*{Acknowledgements}
The second author thanks Johannes Sprang for helpful discussions.

Both authors were supported 
SFB 1085 `Higher Invariants' funded by the Deutsche Forschungsgemeinschaft (DFG) at the University of Regensburg.
The second author was also supported by the DFG
in the GK 1269 at the University of Cologne.

\section{Preliminaries}\label{sec:prelim}
For this section we fix an irreducible $3$-manifold $N$ with a CW-structure.
Let $K$ be a field.

A universal cover $\Map \pi {\tilde N} N$ inherits an induced CW-structure.
The deck transformations act on $\tilde N$ from the left. 
With this left action the cellular chain complex of $\tilde N$ is a
chain complex of left $\IZ[\pi_1(N)]$-modules, which we denote by $\ICellC(N)$.
For a subcomplex $Y \subset N$ the preimage $\pi^{-1}(Y)$ is a CW-subcomplex
of $\tilde N$ and invariant under deck transformations. We define
$\ICellC(Y \subset N)$ to be the cellular complex of $\pi^{-1}(Y)$. This is a complex
of left $\IZ[\pi_1(N)]$-modules.
A lift of the cells of the CW-structure on $N$ is called a \emph{fundamental family}
and determines a basis of each chain module $\operatorname{C}_k(N)$.

We can tensor the complex 
$\ICellC(N)$ with a $(K(t), \IZ[\pi_1(N)])$-bimodule $A$. 
Here $K(t)$ denotes the quotient field of the polynomial ring
in one variable.
If the chain complex $\ICellC(N; A):= A \otimes_{\IZ[\pi_1(N)]}\ICellC(N)$ is not acyclic,
then we define $\tau(N;A):=0$.
If the chain complex is acyclic, then its \emph{Reidemeister torsion} $\tau(N;
A)\in K(t)\setminus \{0\}$ is defined, see \cite{Turaev01} for an introduction.
We quickly recall its construction.  As the chain complex $\ICellC(N; A)$ is
acyclic, we obtain exact sequences of the form
\begin{align*}
0 \rightarrow \image \partial_{i+1} &\rightarrow \ICellC_i (N;A) \rightarrow \image \partial_i \rightarrow 0.
\end{align*}
We fix a basis for each $\image \partial_i$.
From the basis of $\image \partial_{i+1}$ and $\image \partial_i$, we obtain a basis
$b_i$ of $\ICellC_i (N;A)$, which we will compare with the basis $c_i$ of $\ICellC_i(N;A)$ given
by a fundamental family and a basis of $A$. We denote the matrix expressing the basis $b_i$ in terms
of $c_i$ by $[b_i / c_i]$. Define the Reidemeister torsion of $\ICellC(N;A)$ to be
\begin{align*}
	\tau(N;A) := \prod_i \left(\det [b_{i} /c_i ]\right)^{{(-1)}^i} \in K(t).
\end{align*}
It is a unit in $K(t)$ well-defined up to multiplication with $\pm \det g_A$
for $g \in \pi_1(N)$. 

For future reference we mention the following elementary lemma.

\begin{lem}\label{lem:tapbasics}
If $A$ and $B$ are two $(K(t), \IZ[\pi_1(N)])$-bimodules, then
\[ \tau(N;A\oplus B)=\tau(N;A)\cdot \tau(N;B).\]
\end{lem}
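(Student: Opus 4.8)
The plan is to reduce the statement to the elementary multiplicativity of torsion under direct sums of based acyclic chain complexes over $K(t)$. First I would note that, since the tensor product distributes over direct sums, there is a canonical isomorphism of chain complexes of $K(t)$-vector spaces
\begin{align*}
\ICellC(N; A \oplus B) \;\cong\; \ICellC(N;A) \oplus \ICellC(N;B).
\end{align*}
Moreover this isomorphism is compatible with the preferred bases used to define $\tau$: fixing a fundamental family for $N$ and $K(t)$-bases of $A$ and of $B$, the union of the latter two is a $K(t)$-basis of $A \oplus B$, and the resulting basis $c_i$ of $\ICellC_i(N;A\oplus B)$ is, under the isomorphism above, exactly the concatenation of the bases $c_i^A$ and $c_i^B$ of $\ICellC_i(N;A)$ and $\ICellC_i(N;B)$.

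Next I would dispose of the non-acyclic case. Since $H_*(\ICellC(N;A\oplus B)) = H_*(\ICellC(N;A)) \oplus H_*(\ICellC(N;B))$, the complex $\ICellC(N;A\oplus B)$ is acyclic if and only if both $\ICellC(N;A)$ and $\ICellC(N;B)$ are. If not, then by definition $\tau(N;A\oplus B)=0$ and at least one of $\tau(N;A), \tau(N;B)$ is zero, so both sides of the claimed identity vanish. In the acyclic case one has $\image \partial_i^{A\oplus B} = \image \partial_i^A \oplus \image \partial_i^B$ for every $i$; choosing bases of $\image \partial_i^A$ and $\image \partial_i^B$ and taking their union gives a basis of $\image \partial_i^{A\oplus B}$, and the short exact sequence $0 \to \image \partial_{i+1} \to \ICellC_i \to \image \partial_i \to 0$ for the direct-sum complex is the direct sum of the corresponding sequences for $A$ and $B$. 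Hence the induced basis $b_i$ is the concatenation of $b_i^A$ and $b_i^B$, the change-of-basis matrix $[b_i/c_i]$ is block diagonal with diagonal blocks $[b_i^A/c_i^A]$ and $[b_i^B/c_i^B]$, so $\det[b_i/c_i] = \det[b_i^A/c_i^A]\cdot\det[b_i^B/c_i^B]$, and taking the alternating product over $i$ yields the result. Finally, since $g_{A\oplus B} = g_A \oplus g_B$ gives $\det g_{A\oplus B} = \det g_A \cdot \det g_B$, the indeterminacies on the two sides match, so the identity is independent of all choices.

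I do not expect a genuine obstacle here — the content is purely bookkeeping. The only point requiring care is the choice of orderings of the concatenated bases: arranging $c_i$, $b_i$, and the bases of the $\image \partial_i$ so that the matrix $[b_i/c_i]$ is literally block diagonal (rather than block diagonal up to a permutation), or else observing that any such permutation changes $\det[b_i/c_i]$ only by a sign, which is absorbed into the $\pm$ ambiguity already present in the definition of $\tau$. Beyond that, one just has to check the degenerate non-acyclic case against the stated convention, which is immediate.
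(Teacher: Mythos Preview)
Your argument is correct and is exactly the standard bookkeeping proof; the paper itself does not supply a proof but merely records the lemma as an ``elementary'' fact, so there is nothing further to compare.
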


\begin{defn}\label{def:universaltwist}
\begin{enumerate}
\item A representation $V$ of $\pi_1(N)$ \emph{detects the Thurston norm} of $\theta \in \cHo 1 {N;\IZ}$ if
\begin{align*}
\left( \dim V \right)\cdot \Thur \theta = \width \tau(N; V_\theta).
\end{align*}
\item Let $\pi \leq \pi_1(N)$ be a subgroup of finite index and
$V$ a representation of $\pi$. The \emph{induced} representation of $\pi_1(N)$ is
\begin{align*}
\ind{\pi_1(N)} V := V \otimes_{\IZ[\pi]}\IZ[\pi_1(N)].
\end{align*}
Analogously define the induced $(K(t),\IZ[\pi_1(N)])$-bimodule $\ind{\pi_1(N)} A$ 
of a $(K(t),\IZ[\pi])$-bimodule A.
\end{enumerate}
\end{defn}
Gabai \cite[Corollary 6.13]{Gabai83} proved that the Thurston norm is well-behaved under
finite covers. Therefore we are free to consider simpler finite covers. This is
made precise in the lemma below.

\begin{lem}\label{lem:cover}
Let $\Map {p} {M} {N}$ be a connected finite cover and $\theta \in \cHo{1}{N;\IZ}$ a 
cohomology class.
If $V$ detects the Thurston norm of $p^* \theta$, then $\ind{\pi_1(N)} V$ detects
the Thurston norm of $\theta$.
\end{lem}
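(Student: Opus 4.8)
The plan is to reduce the statement to the multiplicativity of Reidemeister torsion under induction, together with Gabai's theorem on the behaviour of the Thurston norm under finite covers. First I would record the key algebraic fact: for a finite-index subgroup $\pi \le \pi_1(N)$ and a $(K(t),\IZ[\pi])$-bimodule $A$, there is a canonical isomorphism of chain complexes
\begin{align*}
\ICellC(N; \ind{\pi_1(N)} A) \;=\; \ind{\pi_1(N)} A \otimes_{\IZ[\pi_1(N)]} \ICellC(N) \;\cong\; A \otimes_{\IZ[\pi]} \ICellC(N),
\end{align*}
coming from the associativity of the tensor product and the fact that, if $\Map{p}{M}{N}$ is the cover corresponding to $\pi$, then $\ICellC(N)$ regarded as a complex of left $\IZ[\pi]$-modules is exactly $\ICellC(M)$. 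Hence $\tau(N; \ind{\pi_1(N)} A) = \tau(M; A)$ up to the usual ambiguity (one has to check the indeterminacy $\pm \det g_A$ is compatible under this identification, which is routine since $g_A$ for $g \in \pi$ acts the same way on both sides). Applying this to $A = V_{p^*\theta}$, and observing that $(\ind{\pi_1(N)} V)_\theta \cong \ind{\pi_1(N)}(V_{p^*\theta})$ as $(K(t),\IZ[\pi_1(N)])$-bimodules — because the twist on the induced module is governed by $\langle \theta, \cdot\rangle$ restricted to $\pi$, which is precisely $\langle p^*\theta, \cdot\rangle$ — I get
\begin{align*}
\width \tau(N; (\ind{\pi_1(N)} V)_\theta) \;=\; \width \tau(M; V_{p^*\theta}).
\end{align*}

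Next I would compare the two sides of the desired equality. Since $V$ detects the Thurston norm of $p^*\theta$ by hypothesis, the right-hand side equals $(\dim_K V)\cdot \Thur{p^*\theta}$. On the other hand $\dim_{K} \ind{\pi_1(N)} V = [\pi_1(N):\pi]\cdot \dim_K V$, so it remains to show
\begin{align*}
[\pi_1(N):\pi]\cdot \Thur{p^*\theta} \;=\; \width \tau(N; (\ind{\pi_1(N)} V)_\theta).
\end{align*}
By the computation above this is the same as $[\pi_1(N):\pi]\cdot \Thur{p^*\theta} = (\dim_K V)\cdot \Thur{p^*\theta}\cdot\frac{[\pi_1(N):\pi]\dim_K V}{\dim_K V}$ — wait, more cleanly: combining the previous displays, $\width \tau(N;(\ind{\pi_1(N)}V)_\theta) = (\dim_K V)\cdot \Thur{p^*\theta}$, whereas $(\dim \ind{\pi_1(N)}V)\cdot \Thur\theta = [\pi_1(N):\pi](\dim_K V)\cdot \Thur\theta$. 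So the equality I must verify is exactly
\begin{align*}
\Thur{p^*\theta} \;=\; [\pi_1(N):\pi]\cdot \Thur\theta.
\end{align*}
This is Gabai's result \cite[Corollary 6.13]{Gabai83}: the Thurston norm of an $n$-fold cover pulls back to $n$ times the norm downstairs. (Strictly, Gabai proves $\Thur{p^*\theta} = n\,\Thur\theta$ for $N$ irreducible; the covering $M$ is automatically irreducible here since finite covers of irreducible $3$-manifolds are irreducible, by the Meeks–Simon–Yau resolution of the Sphere Theorem, so the hypothesis of Gabai's theorem is met.)

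The main obstacle is the bookkeeping in the chain-level isomorphism $\ICellC(N;\ind{\pi_1(N)}A)\cong A\otimes_{\IZ[\pi]}\ICellC(N)$ and the verification that, under this identification, the chosen bases (fundamental families and bases of the bimodules) correspond in a way that makes the torsions agree, not merely up to the stated $\pm\det g_A$ indeterminacy but genuinely compatibly — this is a standard but slightly delicate point about how a fundamental family of cells for $N$ gives a fundamental family for $M$ after multiplying by coset representatives of $\pi$ in $\pi_1(N)$. Everything else (the identification of the twisting modules, the dimension count, and the invocation of Gabai) is formal. I would therefore structure the proof as: (1) establish the bimodule isomorphism $(\ind{\pi_1(N)}V)_\theta\cong\ind{\pi_1(N)}(V_{p^*\theta})$; (2) prove $\tau(N;\ind{\pi_1(N)}A)\doteq\tau(M;A)$ via the chain-level identification; (3) combine with the detection hypothesis and Gabai's covering formula to conclude.
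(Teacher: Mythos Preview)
Your proposal is correct and follows essentially the same route as the paper: establish the bimodule isomorphism $(\ind{\pi_1(N)}V)_\theta\cong\ind{\pi_1(N)}(V_{p^*\theta})$, identify $\ICellC(N;\ind{\pi_1(N)}A)$ with $A\otimes_{\IZ[\pi]}\ICellC(M)$ via coset representatives and a compatible fundamental family to get $\tau(N;\ind{\pi_1(N)}A)=\tau(M;A)$, and then combine the dimension count with Gabai's covering formula $\Thur{p^*\theta}=(\deg p)\,\Thur{\theta}$. The only cosmetic difference is that the paper does not pause over the irreducibility hypothesis in Gabai's theorem, since the ambient standing assumption already places us in that setting.
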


\begin{proof}
By a result of Gabai\cite[Corollary 6.13]{Gabai83} the Thurston norm fulfils the equality
$\deg p \Thur \theta = \Thur {p^* \theta}$. Furthermore, note that
\begin{align*}
\left(\ind{\pi_1(N)} V\right)_{\theta} = K(t) \otimes_K \left(V \otimes_{\IZ[\pi]}\IZ[\pi_1(N)]\right)
= \ind{\pi_1(N)} \left( V_{p^*\theta}\right)
\end{align*}
as $(K(t), \IZ[\pi_1(N)])$-bimodules. Let us abbreviate $V_{p^*\theta}$ with $A$.

First we prove that $\ICellC(M; A)$ is acyclic if and only if $\ICellC(N; \ind{\pi_1(N)}A )$ is acyclic.
Choose the CW-structure on $M$ which is induced of $N$. The following map 
is an isomorphism of chain complex of $K(t)$-vector spaces
\begin{align*}
A \otimes \IZ[\pi_1(N)] \otimes \ICellC(N) &\rightarrow A \otimes \ICellC(M) \\
v \otimes g \otimes e &\mapsto v \otimes g\cdot e.
\end{align*}
Therefore one is acyclic if and only if the other is.

So let $V$ detect the Thurston norm of $p^*\theta \in \cHo 1 {M;\IZ}$, i.e.  we have
\begin{align*}
\left( \dim V\right)\cdot \Thur {p^*\theta} = \width \tau(M; A).
\end{align*}
Note that we have the equality $\deg p \cdot \dim V = \dim \ind{\pi_1(N)} V$. 

We claim that equality $\width \tau(M; A) = \width \tau(N, \ind{\pi_1(N)}A)$
holds. We pick representatives $g_i$ of right cosets, so $\pi_1(N) = \bigcup
\pi_1(M) \cdot g_i$. Then we equip $\ind{\pi_1(N)}A$ with the basis $\{v
\otimes g_i\}$, where  $\{v\}$ is a basis of $A$. 
Given a fundamental family $\{ \widetilde e\}$
for $N$, we equip $M$ with the fundamental family $\{ g_i \cdot \widetilde e\}$.
With these choices made, the isomorphism above also preserves the basis used for calculating the Reidemeister torsion.
Thus even $\tau(M; A)$ and $\tau(N; \ind{\pi_1(N)}A)$ agree.

Combining the results obtained so far, we get
\begin{align*}
\left( \dim \ind {\pi_1(N)} V \right)\cdot \Thur {\theta} &= \left( \dim V \right) \Thur {p^*\theta}
= \width \tau(M, A)\\
&= \width \tau(N,\ind {\pi_1(N)} A) \\
&= \width \tau(N, (\ind {\pi_1(N)} V)_\theta).
\end{align*}
\end{proof}

\begin{defn}\label{def:proprep}
\begin{enumerate}
\item Given a representation $W$ of the group $H$ and a group homomorphism $G \rightarrow H$,
we can let $\IZ[G]$ act through $\IZ[H]$ and obtain a representation $\res {G} V$ of $G$.
If $V$ is isomorphic to the restriction of a representation of a finite group, we say
$V$ \emph{factors through a finite group}.
\item A representation $V$ of $G$ over $\IC$ is called \emph{integral} if there
is a $(\IZ, \IZ[G])$-bimodule $W$ such that $V \cong \IC\otimes_\IZ W$.
\end{enumerate}
\end{defn}

\begin{rem}\label{rem:inducedreps}
Let $\pi \leq \pi_1(N)$ be a finite index subgroup and $V$ be a
representation of $\pi$. If $V$ is integral, then also $\ind {\pi_1(N)} V$ is integral.
If $V$ factors through a finite group, then also $\ind {\pi_1(N)} V$ factors
through a finite group.
\end{rem}
\begin{defn}\label{def:complexrep}
For a character $\Map \alpha {\pi_1(N)} \IZk$ define
the representation $\IC^\alpha$ to be the representation with underlying
$\IC$-vector space $\IC$ and right action given by
\begin{align*}
\IC^\alpha \times \pi_1(N) &\rightarrow \IC^\alpha\\
(z,g) &\mapsto z\alpha(g),
\end{align*}
where we consider $\IZk$ embedded in $\IC$ as the $k$ roots of unity via 
$n\IZ \mapsto \exp{\left(2\pi i n\right)}$.
\end{defn}
\begin{rem}
The representation $\IC^\alpha$ factors through a finite group but
is in general not integral.
\end{rem}
\section{Graph manifolds}\label{sec:graph}
Recall that every irreducible $3$-manifold $N$ admits the JSJ-decomposition,
a minimal collection $\mathcal{T}$ of embedded incompressible tori such that every component of 
$N | \mathcal{T}$ is either ateroidal or Seifert fibred, where $N | \mathcal{T}$ is the
manifold $N$ split along the tori.
\begin{defn}
An irreducible $3$-manifold $N$ is called a \emph{graph manifold}
if all the pieces of its JSJ-decomposition are Seifert fibred.
\end{defn}
We give a list of examples of graph manifolds below. The list is comprehensive
in the sense that every graph manifold is finitely covered by a manifold
contained in the list, see e.g. \cite[Proposition 2.9]{Nagel14}.
\begin{exm}
\begin{enumerate}
\item the $3$-sphere $S^3$,
\item torus bundles,
\item circle bundles, 
\item pieces of the form $\Sigma \times S^1$ glued together along their boundary tori,
where the surface $\Sigma$ always has negative Euler characteristic.
\end{enumerate}
\end{exm}
The Thurston norm in the first two examples vanishes. 
It also has a simple description for circle bundles and so 
we are mainly interested in understanding the last class
of the list above. We give a more precise description of 
these manifolds as manifolds with graph structures.
\begin{defn}
\begin{enumerate}
\item A \emph{graph structure} for $N$ consists of maps
\begin{align*}
\phi_+ \colon \coprod_{v\in I^+} \Sigma_v \times S^1 \rightarrow N\\
\phi_- \colon \coprod_{v \in I^-} \Sigma_v \times S^1 \rightarrow N
\end{align*}
such that $N$ is the push-out of the following diagram
\begin{center}
	\begin{tikzpicture}
		\matrix (m) [matrix of math nodes, row sep=3em, column sep=4em, 
				text height=1.5ex, text depth=0.25ex]
  		{
		     \coprod_{v\in I^+} \Sigma_v \times S^1  & N \\
		     \coprod T_e & \coprod_{v \in I^-} \Sigma_v \times S^1\\
		};
		\path[->] (m-1-1) edge node [above] {$\phi_+$} (m-1-2);
		\path[->] (m-2-1) edge node [left] {$i^+$} (m-1-1);
		\path[->] (m-2-1) edge node [below] {$i^-$} (m-2-2);
		\path[->] (m-2-2) edge node [right] {$\phi_-$} (m-1-2);
	\end{tikzpicture}
\end{center}
where $i_\pm$ are identifications of $\coprod T_e$ with components of 
$\coprod \partial \Sigma_v \times S^1$. We denote by $\phi_v$ the concatenation
\begin{align*}
\phi_v \colon \Sigma_v \times S^1 \hookrightarrow \coprod_{v\in I^\pm} \Sigma_v \times S^1 \rightarrow N.
\end{align*}

\item For a manifold $N$ with a graph structure, we refer to the classes
$t_v := {\phi_v}_* [*_v \times S^1]$ as the \emph{class of the Seifert fibre} in the block $v$.
Define a character $\Map{\alpha}{\pi_1(N)}{\IZk}$ to be \emph{Seifert non-vanishing} if
$\langle  \alpha, t_v\rangle \neq 0$ for all $v \in I_\pm$.
\end{enumerate}
\end{defn}
\begin{lem}\label{lem:graphcover}
Let $N$ be a graph manifold which does not admit a Seifert fibred structure and
is not a torus bundle.
Then there is a finite cover $\widetilde N$ of $N$ with a graph structure and
on $\widetilde N$ there is for all but finitely many prime numbers $p$ a character
$\Map{\alpha}{\pi_1(\widetilde N)}{\IZp}$ which is Seifert non-vanishing.
\end{lem}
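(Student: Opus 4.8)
The plan is to build the desired cover in two stages: first pass to a finite cover where the graph structure becomes combinatorially well-behaved (all blocks are of the form $\Sigma_v \times S^1$ with $\chi(\Sigma_v) < 0$, and the gluing data is controlled), and then construct the character $\alpha$ by prescribing its values on the Seifert fibre classes $t_v$ and on the boundary/edge tori, checking that these choices are consistent on each edge torus $T_e$ so that they glue to a global homomorphism $\pi_1(\widetilde N) \to \IZp$. Since $N$ is a graph manifold which is neither Seifert fibred nor a torus bundle, by the classification recalled after the definition of graph manifolds (see \cite[Proposition 2.9]{Nagel14}) it is finitely covered by a manifold of the fourth type in the example list, i.e.\ one obtained by gluing pieces $\Sigma_v \times S^1$ with each $\chi(\Sigma_v) < 0$; I would take $\widetilde N$ (after possibly enlarging the cover) to carry such a graph structure, with underlying graph $G$ having vertex set $I = I^+ \sqcup I^-$ and edge set indexing the tori $T_e$.

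Next I would analyze the gluing. On an edge $e$ joining blocks $v$ and $w$, the torus $T_e$ sits inside $\partial \Sigma_v \times S^1$ and inside $\partial \Sigma_w \times S^1$; write $s_v, s_w$ for the boundary curves of the base surfaces and $t_v, t_w$ for the two fibre directions, so that $H_1(T_e;\IZ)$ has basis $\{s_v, t_v\}$ and also basis $\{s_w, t_w\}$, related by a matrix $\begin{pmatrix} a & b \\ c & d \end{pmatrix} \in GL_2(\IZ)$. The essential point is that because the JSJ tori are genuinely there (the manifold is not Seifert fibred), at each such edge the two fibre slopes are distinct, so $b \neq 0$: writing $t_w = b\, s_v + d\, t_v$ in $H_1(T_e)$ we have $b \neq 0$. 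A homomorphism $\alpha$ restricted to the block $v$ is determined by a pair of integers $(\sigma_v, \tau_v)$ giving its values on $(s_v, t_v)$ — more precisely on $H_1(\Sigma_v \times S^1)$, where the boundary curves $s_v$ are constrained by the relation that they sum (with signs) to $0$ in $H_1(\Sigma_v)$ when $\Sigma_v$ is closed-up, but I have freedom in $\tau_v$ and in the relative values of the $\sigma_v$'s on the various boundary components. I would choose all the fibre values $\tau_v$ to be $1$ (or a fixed unit mod $p$), and then the gluing condition on edge $e$ becomes a linear equation $\sigma_w = a \sigma_v + c\, \tau_v$ and $\tau_w = b\,\sigma_v + d\,\tau_v$ read modulo $p$; since $b \neq 0$, for all but finitely many primes $p$ we have $b \not\equiv 0 \bmod p$, and then the equation $\tau_w \equiv b\,\sigma_v + d\,\tau_v$ can be solved for $\sigma_v$ in terms of $\tau_v, \tau_w$. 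Propagating this around the graph $G$ gives a system whose solvability modulo $p$ is governed by finitely many integer determinants (one per independent cycle of $G$, together with the closed-surface boundary relations), hence holds for all but finitely many $p$; this may require first passing to a further finite cover of $\widetilde N$ to kill the obstruction cycles (e.g.\ unwrapping the Seifert fibrations or the base surfaces so that the relevant homology classes become primitive and the boundary relations become vacuous or split), which is harmless by Gabai's theorem and Remark/Lemma \ref{lem:cover}.

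The main obstacle I anticipate is exactly this consistency-around-cycles issue: ensuring that the locally-defined values $(\sigma_v, \tau_v)$ can be chosen simultaneously on all blocks so that they agree on every edge torus and satisfy the internal boundary relations of each base surface $\Sigma_v$, all while keeping every $\tau_v$ nonzero mod $p$. The cleanest route is to reduce, by a preliminary finite cover, to the situation where each base surface $\Sigma_v$ has enough boundary components and genus that $H_1(\Sigma_v \times S^1;\IZ)$ surjects freely onto the data we need to prescribe, so that the only remaining constraints are the edge-gluing equations; those are linear over $\IZ$ with the key coefficients $b_e \neq 0$, and reducing modulo a prime $p$ avoiding the finitely many bad primes (divisors of the $b_e$ and of the relevant cycle-determinants) makes the system solvable with a nonvanishing fibre component. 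I would invoke \cite[Theorem 2.15]{Nagel14} or the cited classification to produce the good preliminary cover, and then the character $\alpha$ is obtained by linear algebra over $\IZp$.
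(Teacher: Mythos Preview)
Your two–stage strategy matches the paper's: first pass to a finite cover whose pieces are all $\Sigma_v\times S^1$ with $\chi(\Sigma_v)<0$, then produce the Seifert non-vanishing character on that cover. The paper's proof, however, is considerably shorter, and there are two places where your proposal diverges in ways that matter.

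First, you omit a step the paper makes explicit: the definition of a \emph{graph structure} requires a bipartition $I=I^+\sqcup I^-$ of the vertex set, with the edge tori mapping into the $I^+$ side via $i^+$ and into the $I^-$ side via $i^-$. The paper achieves this by passing to a further double cover, namely the kernel of $\gamma\mapsto \sum_{T\in\mathcal T}\gamma\cdot[T]\in\IZ/2\IZ$, which makes the Bass--Serre graph bipartite. Without this, the cover you produce carries a splitting along tori but not a graph structure in the sense of the paper.

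Second, for the existence of $\alpha$ the paper simply invokes \cite[Theorem~2.15]{Nagel14}; you instead attempt a direct linear-algebra construction. Your sketch correctly isolates the key local fact (adjacent fibre slopes differ, so the off-diagonal entry $b$ is nonzero) and honestly names the obstacle (compatibility of the local data around cycles of the graph and with the relations $\sum \partial\text{-curves}=0$ in each $H_1(\Sigma_v)$). But you do not actually resolve that obstacle: the assertion that one can ``pass to a further finite cover to kill the obstruction cycles'' is exactly the nontrivial content of \cite[Theorem~2.15]{Nagel14}, and your final sentence concedes this by invoking that theorem anyway. So the direct argument, as written, is not self-contained; once you cite \cite[Theorem~2.15]{Nagel14} you are back to the paper's proof, and the preceding linear-algebra discussion becomes motivation rather than argument.
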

\begin{proof}
The manifold $N$ admits a finite cover $M$ with a collection $\mathcal{T}$ of embedded incompressible tori 
such that each component of $M |\mathcal{T}$ 
is of the form $\Sigma \times S^1$ with $\Sigma$ of the negative Euler
characteristic, see \cite[Section 2]{Nagel14}. To such a decomposition is associated the Bass-Serre
graph which has vertices the components of $M |\mathcal{T}$ and
edges the tori in $\mathcal{T}$. 

The Bass-Serre graph can be chosen to be bipartite.
This can be achieved by a further finite cover which is induced by the kernel 
of the map
\begin{align*}
\pi_1(M) &\rightarrow \IZ_2\\
\gamma &\mapsto \sum_{T \in \mathcal{T}} \gamma \cdot [T].
\end{align*}

The existence of the character $\alpha$ follows from \cite[Theorem 2.15]{Nagel14}.
\end{proof}
Now we construct representations $V$ which will detect
the Thurston norm on a $3$-manifold $N$ with a graph structure
and a Seifert non-vanishing character $\alpha \colon \pi_1(N) \rightarrow \IZp$.
\begin{defn}
\begin{enumerate}
\item A representation $V$ of $\IZp$ is called \emph{good} if
$\Map {(1-g)_V} {V} {V}$ is invertible as an endomorphism of the $K$-vector space $V$
for all non-trivial $g \in \IZp$.
\item A representation $V$ of $\pi_1(N)$ is called \emph{good}
if $V$ is isomorphic to $\res \alpha W$ for a good representation $W$
of $\IZp$ and a Seifert non-vanishing character $\alpha$.
\end{enumerate}
\end{defn}

We give some examples of good representations for $\pi_1(N)$ in
the list below. Recall that the representation $\IC^\alpha$ was defined
in Definition \ref{def:complexrep}.
\begin{exm}
Let $p,q$ be two different prime numbers with $q > 2$ and $\Map \alpha {\pi_1(N)} \IZfZ p$ 
be a Seifert non-vanishing character.
\begin{enumerate}
\item The representation $\IC^\alpha$ is good.
\item The augmentation ideal $I(\IC)$ of $\IC[\IZfZ p]$ is the kernel of the map
\begin{align*}
\IC[\IZfZ p] &\rightarrow \IC\\
\sum_{g \in \IZfZ p} a_g g &\mapsto \sum_{g \in \IZfZ p} a_g.
\end{align*}
It is a good representation of $\IZfZ p$. Thus $I^\alpha_{\infty} := \res \alpha I(\IC)$
is a good representation of $\pi_1(N)$. Additionally, this representation
is integral.
\item The augmentation ideal $I(\IF q)$ of $\IF q [\IZfZ p]$
is good and so is $I_q^\alpha := \res \alpha I(\IF q)$.
\end{enumerate}
\end{exm}
The following theorem is the main reason for considering good representations.
\begin{thm}\label{thm:graphdetecting}
Let $N$ be a graph manifold with a graph structure.
Every good representation $V$ detects the Thurston norm of 
$\theta$ for all $\theta \in \cHo 1 {N; \IZ}$.
\end{thm}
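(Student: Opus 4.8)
The plan is to reduce the computation of $\tau(N; V_\theta)$ to a piece-by-piece calculation using the graph structure, exploiting the fact that $V$ is induced from a good representation of $\IZp$ via a Seifert non-vanishing character. First I would use a Mayer--Vietoris argument for Reidemeister torsion adapted to the push-out diagram defining the graph structure: the manifold $N$ is glued from the blocks $\Sigma_v \times S^1$ along the tori $T_e$, so there is a multiplicativity formula expressing $\tau(N; V_\theta)$ as the product of the torsions $\tau(\Sigma_v \times S^1; V_\theta)$ of the blocks, divided by the torsions $\tau(T_e; V_\theta)$ of the gluing tori (all up to the usual units). The key input here is that each $\tau(T_e; V_\theta)$ is trivial, i.e.\ has width zero; this follows because the character $\alpha$ is Seifert non-vanishing, so the Seifert fibre direction acts on $V = \res\alpha W$ by a group element $g$ with $(1-g)_V$ invertible — hence the restriction $\ICellC(T_e; V_\theta)$ is acyclic with torsion a unit of bounded width. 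One must check carefully that acyclicity propagates so that $\ICellC(N; V_\theta)$ itself is acyclic, which again comes down to the goodness hypothesis applied in each block.

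Next I would compute the torsion of a single block $\Sigma_v \times S^1$. Write $\Sigma = \Sigma_v$. Since the $S^1$-factor maps to the Seifert fibre class $t_v$ with $\langle \alpha, t_v\rangle \neq 0$, the representation restricted to $\pi_1(\Sigma \times S^1) = \pi_1(\Sigma) \times \IZ$ has the generator of the $\IZ$-factor acting by an invertible $(1-g)_V$. Using the product formula $\tau(\Sigma \times S^1; V_\theta) = \tau(C_*(\Sigma \times S^1))$ and the fact that $C_*(S^1)$ contributes a factor of the form $\det((1 - g_A t^{\langle\theta, t_v\rangle})_{V_\theta})$ relative to $\det(\text{id})$, a direct computation (essentially the torsion of a mapping torus, or a Wang-sequence argument) gives
\begin{align*}
\width \tau(\Sigma \times S^1; V_\theta) = (\dim V)\cdot |\langle \theta, [\text{fibre of } \partial]\rangle| \cdot (\text{something})
\end{align*}
— more precisely the width should come out to $(\dim V)\cdot \ThEul{\Sigma}$ times the relevant evaluation of $\theta$, matching the Thurston norm contribution of that block. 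Here I would use that the Thurston norm of $\theta$ on a graph manifold with this structure is additive over the blocks and that for $\Sigma \times S^1$ the norm of a class is computed by the Euler characteristic of $\Sigma$ paired against the $\theta$-coordinates, which is standard (McMullen / Thurston for Seifert pieces).

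Finally I would assemble: by the multiplicativity from the first step and the block computation from the second step, $\width \tau(N; V_\theta) = \sum_v \width \tau(\Sigma_v \times S^1; V_\theta) = (\dim V)\sum_v (\text{block contribution}) = (\dim V)\cdot \Thur\theta$, where the last equality is the additivity of the Thurston norm over the graph structure. Since Theorem~\ref{thm:maintheorem-km} already gives the inequality $(\dim V)\cdot \Thur\theta \geq \width\tau(N; V_\theta)$, it suffices to prove the reverse inequality, so I only need a lower bound on each block torsion's width, which makes the block computation more robust (I do not need exact values, only that nothing cancels between blocks — and the triviality of the torus torsions from step one guarantees exactly that no cancellation occurs in the Mayer--Vietoris product). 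The main obstacle I anticipate is the first step: setting up the Mayer--Vietoris / gluing formula for twisted Reidemeister torsion carefully enough that the acyclicity of $\ICellC(N; V_\theta)$ and the vanishing of the boundary-torus torsions genuinely combine to give a clean product formula with no stray contributions — in particular handling the bases coming from the fundamental families across the gluing, and making sure the ``good'' condition is exactly what is needed to kill the torus terms. The block computation and the final additivity are comparatively routine given the literature on Seifert fibred pieces and mapping tori.
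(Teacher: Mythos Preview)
Your proposal is correct and follows essentially the same route as the paper: a short exact sequence of twisted chain complexes coming from the graph decomposition yields the multiplicativity $\tau(N;V_\theta)=\prod_v \tau(\Sigma_v\times S^1;V_\theta)\cdot\prod_e \tau(T_e;V_\theta)^{-1}$, goodness forces the torus torsions to be trivial and the block torsions to equal $\det((1-t_v)_{V_\theta})^{\chi(\Sigma_v)}$, and the final assembly uses the Eisenbud--Neumann additivity of the Thurston norm over the pieces. The paper carries out the block computation exactly rather than appealing to Theorem~\ref{thm:maintheorem-km} for one inequality, but otherwise your outline matches.
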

Some of the calculations have already been discussed elsewhere. We only sketch these and refer
to them\cite[Lemma 4.17, Proposition 4.18]{Nagel14}.
\begin{proof}
Pick CW-structures such that the maps $\phi_\pm$ and $i_\pm$ are inclusion
of subcomplexes. Let $\{T_e\}$ be the collection of graph tori.
The graph structure of $N$ gives rise to a short exact sequence
\begin{align*}
0 \rightarrow \bigoplus_e \ICellC(T_e \subset N; V_\theta)
\xrightarrow{i^+ - i^-} \bigoplus_{v \in I_\pm}\ICellC(\Sigma_v \times S^1 \subset N; V_\theta)
\rightarrow &\ICellC(N; V_\theta) \rightarrow 0.
\end{align*}
As the representation $V$ is good, the map $(1-t_v)_V\colon V \rightarrow V$ is invertible.
Thus $\det(1-t_v) _V\neq 0$ and as a consequence $\det (1-t_v)_{V_\theta} \neq 0$.
Therefore the chain complex $\ICellC(\Sigma_v \times S^1 \subset N, V_\theta)$ is acyclic
and its Reidemeister torsion is 
\begin{align*}
\tau\left(\Sigma_v \times S^1 \subset N, V_\theta\right) &= {\det}\left(\left(1-t_v\right)_{V_\theta}\right)^{\chi(\Sigma_v)}.
\end{align*}
This can be seen as follows.
Because $\phi_v$ injects $\pi_1(\Sigma_v \times S^1)$ into $\pi_1(N)$,
we can identify the chain complexes
$\ICellC(\Sigma_v \times S^1 \subset N; V_\theta) \cong
V_\theta \otimes \IC[\pi_1(N)] \otimes \ICellC(\Sigma_v \times S^1)$.
Now the Reidemeister torsion of the right hand side can be calculated explicitly.

Similarly, we prove that the chain complex 
$\ICellC(T_e \subset N; V_\theta)$ is acyclic as well and
$\tau(T_e \subset N; V_\theta) = 1$. We obtain by the above short exact sequence that
\begin{align*}
\tau(N; V_\theta) = \prod_{v \in I_\pm} {\det}\left( (1 - t_v)_{V_\theta}\right)^{\chi(\Sigma_v)}.
\end{align*}
We calculate $\width {\det}_{V_\theta} \left(1 - t_v\right) = \dim V |\langle \theta, [\Sigma_v] \rangle|$.
Thus taking width in the equation above, we get the equalities
\begin{align*}
\width \tau(N, V_\theta) &=\left( \dim V \right) \sum_{v \in I_\pm} \chi(\Sigma_v) |\langle \theta, [\Sigma_v] \rangle|\\
&= \left( \dim V \right)\sum_{v \in I_\pm} \Thur{ {\phi_v}^* \theta } = \left( \dim V \right)\Thur{ \theta }.
\end{align*}
The second equality is a calculation of the Thurston norm in $\Sigma_v \times
S^1$, see e.g. \cite[Proposition 3.4]{Nagel14}.
The last equality holds by a result of Eisenbud-Neumann \cite[Proposition 3.5]{Eisenbud85}.
\end{proof}

For all graph manifolds but $D^2 \times S^1$, we obtain the following theorem. This is precisely the statement of Theorem~\ref{thm:mainthm} for graph manifolds.

\begin{thm}
Let $N$ be a graph manifold which is not $D^2 \times S^1$. 
For every $\theta \in \cHo{1}{N; \IZ}$
there is a representation $V$ factoring through a finite group 
which detects the Thurston norm of $\theta$. Additionally, the
representation $V$ can be chosen to be either
\begin{enumerate}
\item defined over the complex numbers and be integral, or
\item defined over a finite field $\IF q$ with $q > 2$ prime.
\end{enumerate}
\end{thm}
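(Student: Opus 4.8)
The plan is to reduce the assertion to Theorem~\ref{thm:graphdetecting}. The mechanism is twofold: by Lemma~\ref{lem:cover} it suffices to detect the pulled-back Thurston norm on a finite cover and then induce the detecting representation up, and by Remark~\ref{rem:inducedreps} induction preserves both integrality and the property of factoring through a finite group. The representations fed in will be the explicit good ones from the examples following Theorem~\ref{thm:graphdetecting}, namely $\res\alpha I(\IC)$, which is integral and factors through a finite group, and $\res\alpha I(\IF q)$ for a prime $q>2$ distinct from the prime $p$ used for the Seifert non-vanishing character $\alpha$, which is defined over $\IF q$. One preliminary simplification: if $\theta=0$ --- in particular if $b_1(N)=0$, so that $\cHo 1 {N;\IZ}=0$ --- then $\Thur\theta=0$, the twisting module $V_0$ carries no $t$, and hence $\width\tau(N;V_0)=0$ for, say, the trivial one-dimensional representation; so from now on $\theta\ne 0$. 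Finally, by \cite[Proposition~2.9]{Nagel14} every graph manifold is finitely covered by one of $S^3$, a torus bundle, a circle bundle, or a push-out of blocks $\Sigma_v\times S^1$ with $\chi(\Sigma_v)<0$; by the first reduction we may therefore assume $N$ itself is of one of these kinds, with $N\ne D^2\times S^1$.

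The decisive case is that of a manifold carrying a graph structure. If its Seifert fibre classes are homologically essential --- for instance $N=F_g\times S^1$ with $g\ge 2$, or more generally a block whose fibre $S^1$ survives in $H_1$ --- one can choose a Seifert non-vanishing character $\pi_1(N)\to\IZp$ for every prime $p$, so that $\res\alpha I(\IC)$ and $\res\alpha I(\IF q)$ ($q>2$, $q\ne p$) are good and Theorem~\ref{thm:graphdetecting} detects the Thurston norm of $\theta$. If instead $N$ is neither Seifert fibred nor a torus bundle, Lemma~\ref{lem:graphcover} furnishes, after a further finite cover which we absorb, a Seifert non-vanishing character to $\IZp$ for all but finitely many $p$; fixing one such $p$ we conclude as before, and the abundance of admissible $p$ shows that every odd prime $q\ne p$ serves for the finite-field refinement.

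It remains to handle $S^3$, where $\cHo 1 {S^3;\IZ}=0$ and nothing is needed, and the torus bundles and circle bundles, for all of which the Thurston norm vanishes identically. A torus bundle I would cut along two parallel copies of its fibre torus, producing a graph structure all of whose blocks are products of an annulus with $S^1$; the torsion formula in the proof of Theorem~\ref{thm:graphdetecting} then collapses to $\tau(N;V_\theta)=1$, in agreement with $\Thur\theta=0$. A circle bundle of nonzero Euler number I would instead write as the union of a product $\Sigma\times S^1$ (over the base with an open disc removed) and a fibred solid torus, and run that same computation by hand, using that the fibre class is homologically torsion so that $\langle\theta,\cdot\rangle$ kills it and the torsion once more has width zero. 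The step I expect to be the real obstacle is, in all of these leftover cases, the construction of the Seifert non-vanishing character: the fibre classes are typically torsion, sometimes trivial, in $H_1$, so a character $\pi_1\to\IZp$ can be Seifert non-vanishing only for primes $p$ dividing suitable orders --- an Euler number, or $\det(A^n-I)$ for a power of the monodromy $A$ --- and only after first passing to a finite cover on which some fibre class becomes homologically nonzero. This is precisely the phenomenon dictating the hypotheses of Lemma~\ref{lem:graphcover} and the restriction to $q>2$. Once $p$ and $\alpha$ are fixed, $\res\alpha I(\IC)$ and $\res\alpha I(\IF q)$ ($q>2$, $q\ne p$) detect on the cover, and Lemma~\ref{lem:cover} together with Remark~\ref{rem:inducedreps} transports the conclusion --- with integrality and the finite-group property intact --- back to $N$.
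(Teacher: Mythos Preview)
Your main line of argument---feeding the good representations $\res\alpha I(\IC)$ and $\res\alpha I(\IF q)$ into Theorem~\ref{thm:graphdetecting}, obtaining the Seifert non-vanishing character from Lemma~\ref{lem:graphcover}, and transporting everything back via Lemma~\ref{lem:cover} and Remark~\ref{rem:inducedreps}---is exactly the paper's, and your treatment of the non-Seifert-fibred case is correct.

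The gap is in the degenerate cases. The paper's first move is to reduce not merely to $\theta\ne 0$ but to $\Thur\theta>0$, declaring the vanishing case vacuous. That single stroke removes everything covered by $S^3$ or by a torus bundle, and---after a short Gysin-sequence argument showing that every class on a non-trivial circle bundle is pulled back from the base and hence represented by tori---also everything covered by a non-trivial circle bundle. What survives is only the trivial circle bundle $\Sigma\times S^1$ (where the fibre class is visibly nonzero in $H_1$, so a Seifert non-vanishing character exists for every prime) and the non-Seifert-fibred case.

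You instead try to push torus bundles and non-trivial circle bundles through Theorem~\ref{thm:graphdetecting} by exhibiting product-block decompositions. But that theorem needs a \emph{good} representation, hence a Seifert non-vanishing character, and here the obstacle you flag is fatal rather than merely technical: for an Anosov torus bundle with monodromy $A$ satisfying $\det(A-I)=\pm 1$ the fibre classes vanish in $H_1$, so no character $\pi_1\to\IZp$ is Seifert non-vanishing, and passing to covers does not help---indeed Lemma~\ref{lem:graphcover} excludes torus bundles and Seifert fibred manifolds precisely because its construction breaks down there. So your proposed route is genuinely blocked in these cases; the paper avoids the issue entirely by the $\Thur\theta>0$ reduction.

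A smaller point: the assertion that the Thurston norm vanishes on all circle bundles is false for trivial bundles $\Sigma\times S^1$ with $\chi(\Sigma)<0$. You do handle that manifold elsewhere (under the graph-structure heading, as a single block with homologically essential fibre), but the case split should separate trivial from non-trivial circle bundles, as the paper does.
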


\begin{proof}
The statement is vacuous if $\Thur{\theta}=0$. We can thus restrict ourselves to the case that $\Thur{\theta}>0$. 
In particular, we can assume that $N$ is neither covered by $S^3$ nor is it
covered by a torus bundle. Also, the following claim shows that we can assume
that $N$ is not covered by a non-trivial circle bundle of a surface.

\begin{claim}
If $N$ is covered by a non-trivial circle bundle $\pi \colon E \rightarrow B$, then the Thurston norm vanishes on $N$. 
\end{claim}

Since $\pi$ is a non-trivial circle bundle  the Euler class
is non-trivial. Using the Gysin sequence we see that 
\begin{align*}
H^1(B) \overset{\pi^*}{\rightarrow} H^1(E)
\end{align*}
is surjective. Thus we can represent every class in $H_2(E)$ by 
a multiple of the fundamental class of a torus. Therefore the Thurston norm
vanishes on $E$ and so by \cite[Corollary 6.13]{Gabai83}  also on $N$.  This concludes the proof of the claim.

If $N$ is the trivial circle bundle, then the representation $\IC^\alpha$,
$I_\infty^\alpha$ and  $I_q^\alpha$  will detect the Thurston norm of $\theta$ for any
character $\alpha \colon \pi_1(\Sigma \times S^1) \rightarrow \IZp$ with prime $p$ different from $q$ which
is non-zero on $\pi_1(S^1) \subset \pi_1(\Sigma \times S^1)$. If $N$ is finitely covered
by a trivial circle bundle, then the induced representations 
$\ind {\pi_1(N)} \IC^\alpha$,
$\ind {\pi_1(N)} I_\infty^\alpha$,
and $\ind {\pi_1(N)} I_q^\alpha$ will detect the Thurston norm by Lemma \ref{lem:cover}.

So we are in the case that $N$ does not admit a Seifert fibred structure. 
By Lemma \ref{lem:graphcover} there is a finite cover $M \rightarrow N$ such that $M$ admits a graph structure
and a character $\alpha \colon \pi_1(N) \rightarrow \IZp$ which is Seifert fibre non-vanishing
and with prime $p$ being different from $q$. By Theorem \ref{thm:graphdetecting}
the representations $\IC^\alpha$, $I_\infty^\alpha$ and $I_q^\alpha$ detect the Thurston norm of $p^*\theta$.
By Lemma \ref{lem:cover} the representations $\ind {\pi_1(N)} \IC^\alpha$,
$\ind {\pi_1(N)} I_\infty^\alpha$
and $\ind {\pi_1(N)} I_q^\alpha$ detect the Thurston norm of $\theta$ on $N$.

The representation $\ind {\pi_1(N)} I_\infty^\alpha$ is integral
and $\ind {\pi_1(N)} I_q^\alpha$ is defined over $\IF q$.
\end{proof}

\section{The proof of Theorem~\ref{thm:mainthm} for non-graph manifolds}\label{sec:fibred}

The goal of this section is to prove  Theorem~\ref{thm:mainthm} for 3-manifolds that are not graph manifolds. More precisely, we will prove the following theorem.

\begin{thm}\label{thm:mainthm2}
Let $N$ be an irreducible 3-manifold that is not a  graph manifold and let $\theta \in \cHo{1}{N; \IZ}$.
Then the following hold:
\begin{enumerate}
\item there is an integral representation $V$ factoring through a finite group which detects the Thurston norm, and
\item for almost all primes $q$ there is a representation $V$
  over the finite field $\IF q$ which detects the Thurston norm.
\end{enumerate}
\end{thm}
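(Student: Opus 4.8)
The plan is to run the argument of \cite[Theorem~1.2]{Friedl12} but to keep track of the representations it produces, observing that they may be taken to be permutation modules of finite quotients of $\pi_1(N)$ — which are automatically integral and can be reduced modulo almost every prime. We may assume $\Thur{\theta}>0$, so that $N$ is aspherical. Since $N$ is irreducible and not a graph manifold, its JSJ decomposition contains a hyperbolic piece, so by the work of Agol \cite{Agol13}, Przytycki--Wise \cite{PWise12} and Wise \cite{Wise12} the group $\pi_1(N)$ is virtually special; in particular it is virtually RFRS. Passing to a finite cover $M_1 \to N$ we may assume $\pi_1(M_1)$ is RFRS, and then Agol's fibering theorem \cite{Agol13}, applied to the pull-back of $\theta$, provides a further finite cover $\Map{p}{M}{N}$ such that $\phi := p^*\theta \in \cHo{1}{M;\IZ}$ lies in the closure of the cone over a fibered face of the Thurston norm ball of $M$ — call such a class \emph{quasi-fibered}. (If $b_1(M)=1$ this closed cone is a ray, so $\phi$ is then a genuine fibered class.)

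The technical heart is the following claim: \emph{if $\phi\in\cHo{1}{M;\IZ}$ is quasi-fibered and $K$ is either $\IC$ or $\IF q$, then the trivial one-dimensional representation $K$ of $\pi_1(M)$ satisfies $\width\tau(M;K_\phi)=\Thur{\phi}$, for $K=\IC$ always and for $K=\IF q$ for all but finitely many primes $q$.} When $\phi$ is genuinely fibered with fibre $\Sigma$ this is the classical computation $\tau(M;K_\phi)\doteq\det\left(t\cdot\psi_*-\operatorname{id}\right)$ for the monodromy $\psi_*\in\Aut{H_1(\Sigma;\IZ)}$, whence $\width\tau=-\chi(\Sigma)=\Thur{\phi}$; note the extreme coefficients of this polynomial are $\pm\det\psi_*=\pm1$, units modulo every prime, so here there is no exceptional set. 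For general quasi-fibered $\phi$ over $\IC$ one identifies $\width\tau(M;\IC_\phi)$ with the value at $\phi$ of a semi-norm on $\cHo 1{M;\IR}$ attached to the Newton polytope of the (multivariable) Alexander polynomial $\Delta_M$; by McMullen's theorem this semi-norm agrees with $\Thur{\cdot}$ on a fibered cone, and since $\Delta_M$ is monic on fibered faces one can push the identification to the closure of the cone — this is exactly the argument of \cite[\S3, \S4]{Friedl12}. The finite-field statement is the new point: $\Delta_M$, and the minors computing $\tau$, involve only finitely many nonzero integers, so for every $q$ outside the finite set of primes dividing one of them — and outside the further finite set of primes for which $\ICellC(M;(\IF q)_\phi)$ fails to be acyclic — the reduction mod $q$ preserves the relevant Newton polytope and hence the width, giving $\width\tau(M;(\IF q)_\phi)=\Thur{\phi}$.

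Granting the claim, set $V:=\ind{\pi_1(N)}K$, the representation of $\pi_1(N)$ induced from the trivial representation $K$ of $\pi_1(M)$; concretely $V=K\otimes_{\IZ}\IZ[\pi_1(M)\backslash\pi_1(N)]$ is the permutation module on the finite right coset space, so its $\pi_1(N)$-action factors through the finite quotient by the normal core of $\pi_1(M)$. By Lemma~\ref{lem:cover}, which holds over an arbitrary field, $V$ detects $\Thur{\theta}$ on $N$; taking $K=\IC$ yields an integral representation factoring through a finite group, which is assertion (1), and taking $K=\IF q$ with $q$ outside the finite exceptional set of the claim yields a representation over $\IF q$ detecting $\Thur{\theta}$, which is assertion (2) (cf.\ Remark~\ref{rem:inducedreps}). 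The main obstacle is the claim, and within it the finite-field refinement: one must verify that the degree computation underlying the quasi-fibered case of \cite{Friedl12} survives reduction modulo $q$ for all but finitely many $q$, and bookkeep exactly which primes must be discarded; this is precisely what forces the "almost all primes" in the statement. Everything else — asphericity, the RFRS and fibering input, and the induction step via Lemma~\ref{lem:cover} — is standard or already available in the excerpt.
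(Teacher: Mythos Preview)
Your approach diverges from the paper's, and the divergence contains a genuine gap.

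The central claim --- that for a quasi-fibred class $\phi$ the \emph{trivial} one-dimensional representation already satisfies $\width\tau(M;\IC_\phi)=\Thur{\phi}$ --- is not justified. The identification you assert between $\width\tau(M;\IC_\phi)$ and the semi-norm coming from the Newton polytope of the multivariable Alexander polynomial holds only when the specialisation to the direction $\phi$ has no cancellation; in general the width of the one-variable specialisation can be strictly smaller. For $\phi$ in the \emph{open} fibred cone a single vertex of the Newton polytope is extremal and its coefficient is $\pm 1$, so there is no issue; but for $\phi$ on the boundary of the cone an entire face of the polytope becomes extremal, and the sum of the coefficients along that face can vanish. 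The phrase ``$\Delta_M$ is monic on fibred faces'' does not rule this out. Thus your continuity argument breaks down precisely at the quasi-fibred, non-fibred classes where you need it, and this is a problem already over $\IC$, not only after reduction modulo $q$.

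This is exactly why the paper (following \cite{Friedl12}) does \emph{not} use the trivial representation on $M$. Instead one applies Lemma~\ref{lem:deltaq} to produce a prime $q$ and a homomorphism $\alpha\colon\pi_1(M)\to\IZ/q\IZ$ such that \emph{every} non-trivial character $\rho_j\circ\alpha$ (for $j=1,\dots,q-1$) avoids the finitely many cancellation conditions; then each one-dimensional representation $\IC^{\rho_j\circ\alpha}$ detects $\Thur{p^*\theta}$, and by Lemmas~\ref{lem:tapbasics} and~\ref{lem:augmentionideal} so does their direct sum $\res{\alpha} I(\IC)$, which is integral. Inducing this up to $\pi_1(N)$ via Lemma~\ref{lem:cover} and Remark~\ref{rem:inducedreps} gives part~(1). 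Part~(2) is then deduced from part~(1) by Lemma~\ref{lem:taumodp}, not by a separate mod-$q$ argument on $M$. So integrality is achieved by bundling all the non-trivial one-dimensional twists into an augmentation ideal, rather than by passing to a permutation module as you propose.
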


The proof of Theorem~\ref{thm:mainthm2} is, perhaps not surprisingly, a modification of the proof of the main theorem of \cite{Friedl12}. In an attempt to keep the paper concise we will only indicate which steps of \cite{Friedl12} need to be modified.

A $3$-manifold $N$ is called \emph{fibred} if it can be given the structure of
a surface bundle over $S^1$. We say it is \emph{virtually fibred} if $N$ admits a
finite cover which fibres.
\begin{defn}
A class $\theta \in \cHo{1}{N;\IQ}$ is called \emph{fibred} if
there exists a map $\Map{\rho}{N}{S^1}$ and a class 
$\tau \in \cHo{1}{S^1;\IQ}$ such that $\Map{\rho}{N}{S^1}$
is a fibre bundle and $\rho^* \tau = \theta$.
\end{defn}

The following theorem is the key topological  ingredient in the proof of Theorem~\ref{thm:mainthm2}. This theorem is a combination of the  results of Agol \cite{Agol08,Agol13},
 Przytycki-Wise\cite{PWise12} and Wise \cite{Wise12}.
 We refer to \cite{AFW15} for precise references.

\begin{thm}\label{thm:virtfib}
Let $N$ be an irreducible  $3$-manifold that is not a  graph manifold.
Then given a class $\theta \in \cHo{1}{N;\IQ}$ there exists a finite regular cover $p$
of $N$ such that the class $p^* \theta$ is in the closure of fibred 
classes.
\end{thm}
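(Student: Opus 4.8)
The plan is to derive Theorem~\ref{thm:virtfib} from Agol's virtual fibering criterion, the one remaining ingredient being that $\pi_1(N)$ is virtually RFRS; this last point is where the deep input of Wise, Agol and Przytycki--Wise is used. Before starting, note that it suffices to treat a nonzero primitive integral class $\theta$: scaling $\theta$ by a positive rational affects neither the hypothesis nor the conclusion, since ``lying in the closure of the fibered classes'' means lying in the closed cone over a fibered face of the Thurston norm ball, a cone invariant under positive scaling; and if $\theta = 0$ the statement follows from the existence of a fibered finite cover of $N$.

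First I would collect the group-theoretic input. Since $N$ is irreducible and not a graph manifold, its JSJ decomposition contains a piece with hyperbolic interior. By the work of Wise \cite{Wise12}, of Agol \cite{Agol13}, and of Przytycki--Wise \cite{PWise12} --- treating respectively the hyperbolic pieces with boundary, the closed hyperbolic case, and the JSJ gluings --- the manifold $N$ admits a finite cover $N'$ such that $\pi_1(N')$ embeds into a right-angled Artin group; see \cite{AFW15} for the precise statements. Right-angled Artin groups are RFRS and the RFRS condition is inherited by subgroups, so $\pi_1(N')$ is RFRS. Thus there is a finite cover $\Map{p_0}{N_0}{N}$ with $\pi_1(N_0)$ RFRS, and $N_0$ is again an irreducible $3$-manifold.

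Next I would apply Agol's theorem \cite{Agol08}: for an irreducible $3$-manifold $M$ with $\pi_1(M)$ RFRS and any $\phi \in \cHo 1 {M;\IZ}$, some finite cover of $M$ pulls $\phi$ back into the closure of a fibered cone. Applied to $M = N_0$ and $\phi = p_0^*\theta$, this produces a finite cover $\Map q {N_1}{N_0}$ for which $q^*p_0^*\theta$ lies in the closure of the fibered classes of $N_1$. To obtain a \emph{regular} cover of $N$ as required, let $\Map p {\widehat N} N$ be the finite regular cover associated to the normal core of $\pi_1(N_1)$ in $\pi_1(N)$; it factors through $N_1 \to N$. Since the pullback of a fibered class along a finite cover is again fibered, the linear map $\cHo 1 {N_1;\IR} \to \cHo 1 {\widehat N;\IR}$ sends each fibered cone of $N_1$ into a union of fibered cones of $\widehat N$, and by continuity it sends the closure of the fibered classes into the closure of the fibered classes; as $p^*\theta$ is the image of $q^*p_0^*\theta$, it lies in the closure of the fibered classes of $\widehat N$.

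The main obstacle is not a computational one: the mathematical weight rests entirely on the cited results --- Agol's RFRS fibering criterion on the one hand, and the virtual specialness package of Wise, Agol and Przytycki--Wise on the other. What needs care is only (i) identifying precisely why being ``not a graph manifold'' is exactly the hypothesis under which virtual specialness, hence the virtual RFRS property, is available, and (ii) promoting the cover supplied by Agol's theorem to a regular one without destroying the quasi-fibering of the pulled-back class; both are routine given the tools above.
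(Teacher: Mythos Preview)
The paper does not give its own proof of this theorem: it is stated as a combination of results of Agol \cite{Agol08,Agol13}, Przytycki--Wise \cite{PWise12} and Wise \cite{Wise12}, with a pointer to \cite{AFW15} for precise references. Your proposal is correct and is exactly the unpacking of those citations --- virtual specialness gives virtual RFRS, Agol's criterion \cite{Agol08} then produces a finite cover in which the pulled-back class is quasi-fibered, and passing to the normal core yields a regular cover without losing this property --- so your approach and the paper's (implicit) one coincide.
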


In the proof of Theorem~\ref{thm:mainthm2} we will need the following lemma, 
which is a slight generalisation of  \cite[Lemma~5.7]{Friedl12}. 

\begin{lem}\label{lem:deltaq}
Let $F$ be a free Abelian group and let $p_1,\dots,p_l\in \IZ[F]$ be  non-zero elements. Then  there exists a prime $q$ and a homomorphism
$\alpha\colon F\to \IZ/q\IZ$ such that for any $j\in \{1,\dots,q-1\}$ the character 
\[ \begin{array}{rcl}  \rho_j\colon  \IZ/q\IZ&\to & S^1 \\
a&\mapsto & e^{2\pi iaj/q}\end{array} \]
has the property that $(\rho_j\circ \alpha)(p_1),\dots,(\rho_{j}\circ \alpha)(p_l)$
 are non-zero elements of $\IC$.
\end{lem}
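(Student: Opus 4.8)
The plan is to prove Lemma~\ref{lem:deltaq} by a counting/density argument, reducing the problem about $\IZ[F]$ to finitely many polynomials in finitely many variables and then running a Chebotarev-style or direct pigeonhole argument over a large prime $q$. First I would observe that each $p_i \in \IZ[F]$ involves only finitely many elements of $F$, so together $p_1,\dots,p_l$ are supported on a finitely generated subgroup $F_0 \le F$; it suffices to find $\alpha$ on $F_0$ (extend arbitrarily, or just replace $F$ by $F_0$). Since $F_0$ is free Abelian, fix a basis and identify $\IZ[F_0]$ with the ring of Laurent polynomials $\IZ[x_1^{\pm 1},\dots,x_n^{\pm 1}]$; then each $p_i$ becomes a nonzero Laurent polynomial $P_i$. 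A homomorphism $\alpha\colon F_0 \to \IZ/q\IZ$ together with $\rho_j$ amounts to choosing $a_1,\dots,a_n \in \IZ/q\IZ$ and evaluating $P_i$ at the $q$-th roots of unity $\zeta^{a_1 j},\dots,\zeta^{a_n j}$ where $\zeta = e^{2\pi i/q}$. So I need: a prime $q$ and $a_1,\dots,a_n$ such that for every $j \in \{1,\dots,q-1\}$, none of the $P_i$ vanishes at $(\zeta^{a_1 j},\dots,\zeta^{a_n j})$.

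The key step is to kill all $l\cdot(q-1)$ vanishing conditions at once. I would argue as follows: clear denominators so each $P_i$ is an honest polynomial, and let $P = \prod_i P_i$, a single nonzero polynomial in $\IZ[x_1,\dots,x_n]$. It suffices that $P$ not vanish at $(\zeta^{a_1 j},\dots,\zeta^{a_n j})$ for any $j \in \{1,\dots,q-1\}$. Reducing modulo a prime $q$ that does not divide (the content of) $P$, the reduction $\bar P \in \IF_q[x_1,\dots,x_n]$ is still nonzero; by the Schwartz–Zippel / Lang–Weil lemma the number of zeros of $\bar P$ in $\IF_q^n$ is at most $(\deg P)\, q^{n-1}$. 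Now vary $\alpha$, i.e. the tuple $(a_1,\dots,a_n) \in (\IZ/q\IZ)^n$. For a fixed $j$ coprime to $q$, the map $(a_1,\dots,a_n)\mapsto (ja_1,\dots,ja_n)$ is a bijection of $(\IZ/q\IZ)^n$, so the number of tuples $(a_1,\dots,a_n)$ for which $(ja_1 \bmod q,\dots)$ lands in the zero set of $\bar P$ is at most $(\deg P)\,q^{n-1}$. Summing over the $q-1$ values of $j$, the number of "bad" tuples is at most $(\deg P)(q-1)q^{n-1} < (\deg P)\, q^{n}$. This is $< q^n$ once $q > \deg P$, so a "good" tuple $(a_1,\dots,a_n)$ exists; but one must still be slightly careful, because evaluating $P_i$ at roots of unity in $\IC$ and seeing whether that complex number is nonzero is not literally the mod-$q$ statement — so I would instead pick $q$ large and use that the reduction mod a prime above $q$ in $\IZ[\zeta]$ of $P_i(\zeta^{a_1 j},\dots)$ is $\bar P_i$ evaluated at the image of $\zeta$, which is a primitive $q$-th root of unity in $\overline{\IF_q}$; nonvanishing there forces nonvanishing in $\IC$.

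So the refined argument: pick a prime $q$ with $q > \deg(\prod_i P_i)$ and $q$ not dividing any nonzero coefficient of any $P_i$. Over $\overline{\IF_q}$ the element $\zeta$ reduces to a primitive $q$-th root of unity $\omega$ (as $q \nmid q$... rather, one uses that $x^q - 1$ is separable mod any prime $\ne q$, but here we want $q$-th roots of unity mod $q$ itself where $x^q-1=(x-1)^q$, so this needs care — I would instead take the $\rho_j$ argument at face value in $\IC$ and run Schwartz–Zippel over $\IF_{q'}$ for a suitable \emph{different} prime $q'$, or, following \cite[Lemma~5.7]{Friedl12}, use the original prime and the fact that a Laurent polynomial nonzero mod $q$ evaluated at a primitive $q$-th root of unity over a number field is generically nonzero). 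The honest way, and the one I expect the paper takes: choose $q$ so large that $q > \deg P$ and $q$ exceeds the number of monomials times the max coefficient, ensuring $\bar P \ne 0$ in $\IF_q[x_i^{\pm}]$; then for the zero set of $\bar P$ in $(\IF_q^\times)^n$ apply the torus Schwartz–Zippel bound; the roots of unity $\zeta^{a_i j}$ do not literally live in $\IF_q$, so one tensors up to the field $\IF_q(\mu_q)$ — but $\mu_q \subset \IF_q$ only when... Here is the cleanest fix, which I would adopt: do \emph{not} reduce mod $q$ at all. Instead, regard $R_i(a_1,\dots,a_n,j) := P_i(e^{2\pi i a_1 j/q},\dots,e^{2\pi i a_n j/q})$; for fixed $j$ coprime to $q$ the substitution $b_k = a_k j \bmod q$ is a bijection, so it suffices to count tuples $(b_1,\dots,b_n)\in(\IZ/q)^n$ with $P_i(\zeta^{b_1},\dots,\zeta^{b_n}) = 0$ for some $i$. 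The polynomial $P_i(\zeta^{x_1},\dots,\zeta^{x_n})$, as $(x_1,\dots,x_n)$ ranges over $(\IZ/q)^n$, vanishes on at most $(\deg P_i)\,q^{n-1}$ points: this is exactly Schwartz–Zippel applied to $P_i$ viewed as a nonzero polynomial over the field $\IQ(\zeta)$ and evaluated on the grid $\{1,\zeta,\dots,\zeta^{q-1}\}^n$, which has $q$ distinct elements in each coordinate, for all $q$ with $\bar P_i \ne 0$, which holds once $q$ is larger than all coefficients appearing. Then total bad count over all $i$ and all $j\in\{1,\dots,q-1\}$ is $\le l\,(\deg P)(q-1)q^{n-1} < q^{n}$ as soon as $q > l \deg P$, giving the desired $\alpha$. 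The main obstacle is precisely this bookkeeping — making sure the Schwartz–Zippel bound applies over the cyclotomic field for the chosen prime $q$ (i.e. that reduction does not kill the polynomial) and that the "for all $j$" quantifier is absorbed by the union bound rather than destroying it; the factor $(q-1)$ from the $j$'s is exactly compensated by the $q^n$ total, which is why one only needs $q$ large, and this is the content already present in \cite[Lemma~5.7]{Friedl12} that we are generalising from a single $p$ to finitely many $p_1,\dots,p_l$.
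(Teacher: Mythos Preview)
Your counting argument has a genuine arithmetic error. With $P=\prod_i P_i$ and $B\subset(\IZ/q\IZ)^n$ the set of tuples $(b_1,\dots,b_n)$ with $P(\zeta^{b_1},\dots,\zeta^{b_n})=0$, Schwartz--Zippel over $\IQ(\zeta)$ gives $|B|\le (\deg P)\,q^{n-1}$, and the bad set for a fixed $j$ is $j^{-1}B$, of the same size. But the union bound then yields
\[
\Bigl|\bigcup_{j=1}^{q-1} j^{-1}B\Bigr|\le (q-1)(\deg P)\,q^{n-1},
\]
and the inequality $(q-1)(\deg P)\,q^{n-1}<q^n$ is equivalent to $\deg P<q/(q-1)$, which \emph{fails} for every $q\ge 2$ as soon as $\deg P\ge 2$. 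Your claim that ``$q>l\deg P$'' suffices is simply false; the factor $q-1$ is \emph{not} compensated by the $q^n$ total. The argument can be rescued, but not by counting: since $P$ has integer coefficients, the Galois automorphism $\sigma_j\in\operatorname{Gal}(\IQ(\zeta)/\IQ)$ sending $\zeta\mapsto\zeta^j$ satisfies $\sigma_j\bigl(P(\zeta^{b_1},\dots,\zeta^{b_n})\bigr)=P(\zeta^{jb_1},\dots,\zeta^{jb_n})$, so $B$ is invariant under scaling by $(\IZ/q\IZ)^\times$ and hence $\bigcup_j j^{-1}B=B$. Thus the ``for all $j$'' condition is automatic once it holds for $j=1$, and one only needs $|B|<q^n$, i.e.\ $q>\deg P$. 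You did not make this observation; without it the proof does not close.

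The paper's proof avoids all of this by a much simpler reduction: rather than working in $n$ variables and counting, it first chooses a homomorphism $\Psi\colon F\to\IZ$ such that each $\Psi(p_i)\in\IZ[s^{\pm1}]$ is nonzero (a generic projection suffices), and then observes that these finitely many one-variable Laurent polynomials have only finitely many complex zeros, so for all but finitely many primes $q$ no primitive $q$-th root of unity is a zero of any $\Psi(p_i)$. The map $\alpha$ is then $F\xrightarrow{\Psi}\IZ\to\IZ/q\IZ$. This two-line argument already handles the ``for all $j$'' quantifier for free, since the $\zeta^j$ are exactly the primitive $q$-th roots of unity, and it needs no Schwartz--Zippel, no reduction mod $q$, and none of the characteristic bookkeeping that occupies most of your proposal.
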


\begin{proof} 
As in the proof of \cite[Lemma~5.7]{Friedl12} we first note that 
there exists a homomorphism $\Psi\colon F\to \IZ=\langle s\rangle$ such that $\Psi(p_1),\dots,\Psi(p_l)\in \IZ[\IZ]=\IZ[s^{\pm 1}]$ are non-zero polynomials.
Since the polynomials $\Psi(p_1),\dots,\Psi(p_l)$ have finitely many zeros it follows that there exists a prime $q$ such that no primitive $q$-th root of unity is a zero of any $\Psi(p_i)$, $i=1,\dots,l$. 
The homomorphism $F\xrightarrow{\Psi}\IZ\to \IZ/q\IZ$  has the desired property.
\end{proof}

A \emph{representation $V$ of a group $G$ over a ring $R$} is a $(R,\IZ[G])$-bimodule such that $V$ is a  finitely generated free $R$-module.
Given a representation $V$ of $G$ over $\IZ$ we denote by $V^{\IC}=\IC \otimes_{\IZ} V$ 
the corresponding complex representation of $G$ and given a
prime $p$ we denote by $V^p=\IF p \otimes_{\IZ} V$ the corresponding
representation of $G$ over the finite field $\IF p$. 

\begin{lem}\label{lem:taumodp}
Let $N$ be $3$-manifold, let $\theta\in H^1(N;\IZ)$ be non-trivial and let $V$ be an integral representation of $\pi_1(N)$.  Then for  all but finitely many primes $p$ we  have 
\[ \width( \tau(N,V^p_\theta))=\width( \tau(N,V^{\IC}_\theta)).\]
\end{lem}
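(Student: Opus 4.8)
The strategy is to express the Reidemeister torsion of the twisted chain complex as an explicit quotient of determinants of matrices over a polynomial ring, and then to track how reduction modulo $p$ interacts with these determinants. First I would fix a CW-structure on $N$ and a fundamental family of cells, so that $\ICellC(N)$ becomes a complex of based free $\IZ[\pi_1(N)]$-modules. Twisting by $V^{\IC}_\theta$ and by $V^p_\theta$ then yields based chain complexes of $\IC(t)$-vector spaces and of $\IF p(t)$-vector spaces respectively, and crucially the \emph{integrality} hypothesis on $V$ means both are obtained by base change from a single based complex $C_*$ of free $\IZ[t^{\pm 1}]$-modules: namely $C_* = (\IZ[t^{\pm1}]\otimes_{\IZ}V)\otimes_{\IZ[\pi_1(N)]}\ICellC(N)$, with boundary matrices having entries in $\IZ[t^{\pm 1}]$. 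Tensoring $C_*$ with $\IC(t)$ (respectively $\IF p(t)$) over $\IZ[t^{\pm1}]$ recovers the two complexes whose torsions we must compare.

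\textbf{Key steps.} The second step is to reduce the torsion computation to the torsion of a single long matrix. Using Lemma~\ref{lem:tapbasics}-style manipulations, or more directly a standard argument with the boundary maps, one shows that when $C_*\otimes \IC(t)$ is acyclic the torsion equals, up to a unit of $\IZ[t^{\pm1}]$ (a monomial $\pm t^k$, which has width zero), the ratio $\det A /\det B$ where $A$ and $B$ are square submatrices built from the boundary matrices of $C_*$ --- concretely one splits $C_*$ into even and odd parts and takes $A$ to be a maximal-rank square submatrix of the combined boundary map. Both $\det A$ and $\det B$ lie in $\IZ[t^{\pm1}]$. The third step is the arithmetic heart: for a non-zero Laurent polynomial $f(t)=\sum_{i=k}^{l}a_i t^i\in\IZ[t^{\pm1}]$ with $a_k,a_l\neq 0$, reduction modulo a prime $p$ changes the width only if $p\mid a_k$ or $p\mid a_l$, which excludes only finitely many primes; so for all but finitely many $p$ we have $\width(f\bmod p)=\width f$. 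Applying this to $f=\det A$ and $f=\det B$ (and noting that for large $p$ the chosen submatrices remain of maximal rank, since a non-zero minor stays non-zero mod $p$ for almost all $p$, so the complex stays acyclic and the \emph{same} $A,B$ work), we get $\width(\det A\bmod p)-\width(\det B\bmod p)=\width\det A-\width\det B$ for almost all $p$, which is exactly the claimed equality $\width(\tau(N,V^p_\theta))=\width(\tau(N,V^{\IC}_\theta))$.

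\textbf{Main obstacle.} The one genuinely delicate point is controlling acyclicity under reduction mod $p$ and ensuring a coherent choice of bases works simultaneously in both characteristics. Over $\IC(t)$ acyclicity gives a well-defined rank for each boundary map $\partial_i\otimes\IC(t)$; one must argue that for almost all $p$ the ranks of $\partial_i\otimes\IF p(t)$ are the same (equivalently, that the complex stays acyclic mod $p$). This follows because the rank of $\partial_i$ over $\IZ[t^{\pm1}]$ is witnessed by the non-vanishing of some minor $m_i(t)\in\IZ[t^{\pm1}]\setminus\{0\}$, and $m_i(t)\bmod p\neq 0$ for all but finitely many $p$; excluding the finitely many bad primes for each of the finitely many indices $i$ leaves a cofinite set of good primes. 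Once the ranks agree, the torsion formula in terms of $\det A/\det B$ is valid over both fields with the same matrices $A,B$, and Step~3 finishes the argument. A minor bookkeeping issue is the indeterminacy of $\tau$ up to $\pm\det g_A$; but for $V$ integral and $\theta$-twisted, $\det (g_{V_\theta}) = \pm t^{\dim V\cdot\langle\theta,g\rangle}$ has width zero, so the width of $\tau$ is genuinely well-defined and unaffected.
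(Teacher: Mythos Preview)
Your argument is correct and follows the same overall strategy as the paper: use the integrality of $V$ to realise the torsion as a ratio of elements of $\IZ[t^{\pm 1}]$, then observe that width is preserved under reduction modulo $p$ for all but finitely many primes. The execution differs, however. The paper invokes a ready-made formula from \cite[p.~49]{FV10}: there exist $g,h\in\pi_1(N)$ with $\theta(g),\theta(h)\ne 0$ and a square matrix $B$ over $\IZ[\pi_1(N)]$ such that $\tau(N,W_\theta)=\det_W(B)\cdot\det_W(1-g)^{-1}\cdot\det_W(1-h)^{-1}$ holds uniformly for \emph{every} representation $W$; plugging in $W=V$ (integral) and $W=V^p$ immediately gives the two torsions as $x(t)/y(t)$ and its reduction mod $p$, with no need to discuss acyclicity or choose compatible subbases. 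Your route is more elementary---it avoids the black box---but in exchange you must argue that the same choices of maximal-rank submatrices work in both characteristics, which you do correctly via the minor argument. One small point: your ``$\det A/\det B$'' is really an alternating product of several determinants (one per boundary map), and you should note separately that if $\ICellC(N;V^{\IC}_\theta)$ is not acyclic then neither is $\ICellC(N;V^p_\theta)$ for any $p$ (ranks can only drop under reduction), so both widths are zero by convention; the paper's formula handles this case automatically since $\det_V(B)=0$ forces $\rho_p(\det_V(B))=0$.
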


\begin{proof}
We provide the proof in the case that $N$ is closed. The case that $N$ has non-empty boundary is proved completely analogously.

We write $\pi=\pi_1(N)$.
It follows from \cite[p.~49]{FV10} that there exist $g,h\in \pi$ with
$\theta(g)\ne 0$ and $\theta(h)\ne 0$ and a  square matrix $B$ over $\IZ[\pi]$
such that  for any representation $W$ of $\pi$ over a commutative ring $R$ we
have
\[ \tau(N,W_\theta)=\det_W(B)\cdot \det_W(1-g)^{-1}\cdot \det_W(1-h)^{-1}.\]

Here, given a $k\times k$-matrix $C$ over $\IZ[\pi]$ we denote by $\det_W(C)$ the determinant of the homomorphism
\[  W\otimes_{\IZ[\pi]}\IZ[\pi]^k\to  W\otimes_{\IZ[\pi]}\IZ[\pi]^k\]
given by right multiplication by $\operatorname{id}\otimes C$. 

Now let $V$ be an integral representation of $\pi_1(N)$.  
Given a prime $p$ we denote by $\rho_p\colon \IZ\to \IF p$ the projection map. This map induces a map $\rho_p\colon\IZ[t^{\pm1 }]\to \IF p[t^{\pm 1}]$ and also a map
\[ \rho_p\colon \{ \textstyle \frac{x(t)}{y(t)}\,|\, x(t),y(t)\in \IZ[t^{\pm 1}]\mbox{ with }\rho_p(y(t))\ne 0\}\to \IF p(t).\]
We let $x(t)=\det_V(B)$ and $y(t)=\det_V(1-g)\cdot \det_V(1-h)$. It follows   easily from the above formula for Reidemeister torsions and the fact that taking determinants commutes with ring homomorphisms that
 \[ \tau(N,V_\theta^{\IC})=x(t)y(t)^{-1}\]
 and  that for any prime $p$  we have
\[ \tau(N,V_\theta^p)=\rho_p(x(t))\cdot \rho_p(y(t))^{-1}.\]
Thus, if $p$ is coprime to the bottom and the top coefficients of $x(t)$ and $y(t)$ we  have 
\[ \width( \tau(N,V_\theta^p))=\width( \tau(N,V_\theta^{\IC})).\]
\end{proof}

For the record we recall the well-known elementary fact.

\begin{lem}\label{lem:augmentionideal}
Given $q\in \IN$ the augmentation ideal $I(\IC)$ of $\IC[\IZ/q\IZ]$, viewed as a representation of $\IZ/q\IZ$, is isomorphic to the direct sum of the representations
\[ \begin{array}{rcl} \rho_j\colon \IZ/q\IZ&\to & \operatorname{Aut}(\IC)\\
a&\mapsto & (e^{2\pi ija/q})\end{array}\]
with $j\in \{1,\dots,q-1\}$.
\end{lem}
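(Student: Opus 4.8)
The statement to prove is Lemma~\ref{lem:augmentionideal}: the augmentation ideal $I(\IC)$ of $\IC[\IZ/q\IZ]$ decomposes as $\bigoplus_{j=1}^{q-1}\rho_j$.

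\medskip

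The plan is to use the fact that $\IZ/q\IZ$ is a finite abelian group, so over $\IC$ its group algebra is semisimple and splits completely into one-dimensional representations. First I would recall that the regular representation $\IC[\IZ/q\IZ]$ decomposes as the direct sum of all the irreducible characters of $\IZ/q\IZ$, each appearing with multiplicity one; explicitly, for $j\in\{0,1,\dots,q-1\}$ the character $\rho_j\colon a\mapsto e^{2\pi i j a/q}$ runs over all $q$ irreducible representations, and $\IC[\IZ/q\IZ]\cong\bigoplus_{j=0}^{q-1}\rho_j$. Concretely, an explicit isomorphism is given by the idempotents $e_j=\tfrac1q\sum_{a\in\IZ/q\IZ}e^{-2\pi i j a/q}\,a$, which span the isotypic lines and on which a generator of $\IZ/q\IZ$ acts by $e^{2\pi i j/q}$.

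\medskip

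Next I would identify the augmentation ideal inside this decomposition. The augmentation map $\varepsilon\colon\IC[\IZ/q\IZ]\to\IC$, $\sum a_g g\mapsto\sum a_g$, is precisely the projection onto the trivial summand $\rho_0$: indeed $\varepsilon$ is $\IZ/q\IZ$-equivariant with $\IZ/q\IZ$ acting trivially on the target, so it must kill every non-trivial isotypic component and restrict to an isomorphism on the trivial one (equivalently, $\varepsilon(e_0)=1$ and $\varepsilon(e_j)=0$ for $j\neq 0$). Hence $I(\IC)=\ker\varepsilon=\bigoplus_{j=1}^{q-1}\rho_j$, which is the claim. Alternatively, one checks directly that $\{e_1,\dots,e_{q-1}\}$ is a basis of $\ker\varepsilon$ consisting of eigenvectors on which the generator acts by the eigenvalues $e^{2\pi i j/q}$.

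\medskip

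There is essentially no obstacle here: this is the standard character theory of the cyclic group together with the observation that augmentation is projection to the trivial summand. The only thing to be mildly careful about is the indexing convention for $\rho_j$, so that the $q-1$ summands listed are exactly the non-trivial characters and none is omitted or repeated; with the convention $\rho_j(a)=e^{2\pi i j a/q}$ for $j=1,\dots,q-1$ this is immediate since these are pairwise non-isomorphic and all non-trivial.
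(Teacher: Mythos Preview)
Your proof is correct and is exactly the standard argument one would give. The paper itself does not prove this lemma at all: it merely introduces it with ``For the record we recall the well-known elementary fact'' and states it without proof, so there is nothing to compare against beyond noting that your argument is the expected one.
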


Now we are in a position to prove Theorem \ref{thm:mainthm2}.

\begin{proof}[Proof of Theorem \ref{thm:mainthm2}]
Let $N$ be $3$-manifold which is not a closed graph manifold and let $\theta\in H^1(N;\IZ)$.
By Theorem~\ref{thm:virtfib}  there exists a finite $k$-fold regular cover $p\colon M\to N$ such that the class $p^* \theta$ is in the closure of fibred 
classes.

If we follow the  proofs of Proposition~5.8 and Theorem~5.9 in \cite{Friedl12},
if we replace \cite[Lemma~5.7]{Friedl12}
by Lemma \ref{lem:deltaq}, and if we apply Lemmas~ \ref{lem:tapbasics} and~\ref{lem:augmentionideal}
then we see that there exists a prime $q$ and a homomorphism
$\alpha\colon \pi_1(M)\to \IZ/q\IZ$ such that 
\[ \width\left(\tau(M,\left(\res {\alpha} I(\IC)\right)_{p^*\theta})\right)=(q-1)x_M(p^*\theta).\]
Put differently, the $(q-1)$-dimensional representation $\res {\alpha} I(\IC)$ detects the Thurston norm of $p^*\theta$.
Now the first part of  theorem is  an immediate consequence of Remark~\ref{rem:inducedreps} together with Lemma~\ref{lem:cover}.

The second part is a consequence of the first part together with Lemma~\ref{lem:taumodp}.
\end{proof}

\bibliography{biblio}{}
\bibliographystyle{alpha}
\end{document}